\theoremstyle{remark}
\theoremstyle{plain}
\newtheorem{thm}{\protect\theoremname}[section]
\theoremstyle{definition}
\newtheorem{example}[thm]{\protect\examplename}
\theoremstyle{definition}
\newtheorem{defn}[thm]{\protect\definitionname}
\theoremstyle{plain}
\newtheorem{lem}[thm]{\protect\lemmaname}
\newenvironment{proof}[1][\protect\proofname]{\par
	\normalfont\topsep6\p@\@plus6\p@\relax
	\trivlist
	\itemindent\parindent
	\item[\hskip\labelsep\scshape #1]\ignorespaces
}{%
	\endtrivlist\@endpefalse
}
\providecommand{\proofname}{Proof}
\theoremstyle{plain}
\newtheorem{prop}[thm]{\protect\propositionname}
\theoremstyle{remark}
\date{}
\providecommand{\definitionname}{Definition}
\providecommand{\examplename}{Example}
\providecommand{\lemmaname}{Lemma}
\providecommand{\propositionname}{Proposition}
\providecommand{\remarkname}{Remark}
\providecommand{\theoremname}{Theorem}
\begin{document}
\title{A lattice-ordered monoid on multilayer networks}

%----------Author 1
\author[J. D\'iaz-Boils]{J. D\'iaz-Boils}
\address{Departament d'Economia Aplicada\\
Facultat d'Economia\\
Avinguda dels Tarongers\\
Universitat de Val\`encia\\
46022-Val\`encia. Spain.}
\email{joaquin.diaz@uv.es}

%----------Author 2
\author[O. Galdames-Bravo]{O. Galdames-Bravo}

\address{Departament de Matem\`atiques\\ 
CIPFP Vicente Blasco Ib\'a\~{n}ez\\
Gran Via del Regne de Val\`encia, 46\\
46005-Val\`encia. Spain.}

\email{galdames@uv.es}
%----------classification, keywords, date
\subjclass[2020]{Primary 06A06, Secondary 05C99}

\keywords{lattice-ordered monoid, multilayer network, interior mapping, partial operation.}

\maketitle

\begin{abstract}
{In the present paper we introduce a lattice-ordered
partial monoid structure on a suitable set of multilayer networks.
We first study a kind of mappings that preserve the partial 
order and describe the order structure. After that we define 
the lattice-ordered monoid and deduce the main properties.}
{lattice-ordered monoid, multilayer network, interior mapping, partial operation.}
%%%% If classification number provided then
\\
2020 Math Subject Classification: Primary 06A06, Secondary 05C99
\end{abstract}

%%%%%%%%%%%%%%%%%%%%%%%%%%%%%%%%%%%%%%%%%%%%%%%%%%%%
\section{Introduction}\label{intro}
%%%%%%%%%%%%%%%%%%%%%%%%%%%%%%%%%%%%%%%%%%%%%%%%%%%%

On the one hand a multilayer network can be seen as a graph or a multigraph of 
graphs structures and they are habitually used as a tool for 
the study in applied science by means of mathematical 
formulations evolving for instance graph theory, topology or 
statistics, see for instance \cite{Bia,Bocetal} and references therein.
On the other hand lattice ordered monoids \cite{Birk} has been widely studied 
from several points of view (see e.g. \cite{Keh,Yoe} and references therein).
In the present paper we propose a join scheme of both conceps, multilayer network 
and lattice ordered monoid.\\

Our original interest on such structures is due to the fact 
that they provide an algebraic framework for an abstract notion of \emph{embodiment} in Neuroscience
by means of multilayer networks with a partial structure developed by the first author in \cite{Sign}.
This structure opens the possibility to a dynamical behaviour, which 
needs a suitable setting for being studied. At this point we obviate the 
classical interaction of an static network and focus on the algebraic 
structure that we define and how it can change the network structure.
The ideas we develop are mainly oriented to the original example 
described in \cite{Sign}, but we notice that one can easily extrapolate it to 
any other contexts where it appear multilayer networks or related structures
as, for example, multiplex networks, general networks or simply graphs and multigraphs.
We also notice the structure we define is actually a 
partial commutative monoid for our convenience, 
but the theory we develope apply to general 
commutative monoids. As far as we know there is not in the literature such an approach for a partial operation.\\

We outline the paper as follows. In Section \ref{intro} we introduce the paper 
and present the example that inspire us, then we depict in Section \ref{pomonoid} the first properties
for the partial ordering we introduce for multilayer networks and define 
a special sort of mappings. The aim of Section \ref{interior} 
is the study such mappings as interior mappings. 
Section \ref{lattice} is devoted to obtain
some results by applying the lattice structure we endow to our set 
of multilayer networks. Finally in Section \ref{monoid} we 
deduce key properties for the lattice-ordered monoid that we 
suggest for our scheme.\\

Let us define the set of multilayer networks we will deal.
Let $X$ be a set, a multisubset is a pair $(Y,m)$ where $Y$ is the underlying 
subset of $X$ and $m\colon Y\to\mathbb{Z}^+$ is the \emph{multiplicity function} that 
assigns to each element in $Y$ the number of occurrences (see \cite{Bliz}). 
Next definitions can be found in \cite{Sign} which are based in \cite{Baezetal} 
and \cite{McLane}, where multilayer networks are included into an abstraction 
called network model. There are other definitions by means the classical 
adjacency tensor representation (see for instance \cite{Dometal,Kivetal} and 
references therein). In order to simplify the abstract definition of network 
model and not lose the tensor representation we propose a definition which is 
halfway between both definitions.\\ 

A \emph{multigraph} $G$ on a set of \emph{nodes} $V(G)$ is a multisubset 
of \emph{edges} $E(G)$ that corresponds to pairs of elements of $V(G)$, 
together with the multiplicity function $m_G\colon E(G)\rightarrow\mathbb{Z}^+$. 
Similarly, the edges could have diferent colors. Let $C$ be a finite set of colors, 
and $col_G\colon E(G)\to \mathcal{P}(C)$ a mapping that assigns to each edge a 
subset of colors. Then, a \emph{layer} is the pair $(G,col_G)$, where $G$ is a 
multigraph. We will identify the layer $(G,col_G)$ with $G$ and, for $s\in\mathbb{N}$, 
we say that a layer is \emph{$s$-colored} if $col_G$ is onto and $s=|C|$, i.e.
$s$ denotes the number of colors included into the layer.
Let the set of nodes indexed by the set $\{1,\dots,n\}$ and denote 
by $MG(n)$ the set of multigraphs with such $n$ nodes. Let $c$ be 
a single color, then we denote by $MG^{c}(n)$ the set of $1$-colored layers.
Let $C=\{c_1,\dots,c_m\}$ be a set of colors, then we define the 
set of multilayer networks as the product
$$
MG^{\otimes C}(n):=(MG^{c_1}\otimes\cdots\otimes MG^{c_m})(n)
= MG^{c_1}(n)\times\cdots\times MG^{c_m}(n)\,.
$$
So every multigraph in $MG^{\otimes C}(n)$ is called a \emph{$|C|$-colored 
multilayer}. We observe that the tensor product represents the way in which the 
different layers of the multilayer are presented, taking care of the ordering. The details of these 
definitions are given in \cite{Sign}.
We will fix the nodes to the finite set $V$, so we just denote such a set by $MG^{\otimes C}$.
Now we are in position to define a commutative binary operation in $MG^{\otimes C}$. Let us denote by $\sqcup$ the disjont union of sets.
\begin{defn}
Let $C$ and $V$ be fixed sets of colors and nodes respectively.
Let a $s$-colored layer $G\in MG^{\otimes C}$ and a $q$-colored layer 
$H\in MG^{\otimes C}$, and assume that $C_1:=col(E(G))\subseteq C$, 
$C_2:=col(E(H))\subseteq C$ and that $V(G),V(H)\subseteq V$. Then the operation
\[
\odot\colon MG^{\otimes C}(n)\times MG^{\otimes C}(m)\longrightarrow MG^{\otimes C}
\] 
produces a new ($s+q-r$)-colored layer $G \odot H$, where $r=|C_1\cap C_2|$ with $n+m-p$ vertices 
where $p=\left|V(G)\cap V(H)\right|$ defined as 
$V(G\odot H):=V(G)\sqcup V(H)$, $E(G \odot H):=E(G) \cup E(H)$,
$m_{G \odot H}:=m_G + m_H$ and $col_{G \odot H}:=col_G \cup col_H$, where
the mappings are defined by a natural way.
\end{defn}

We set $\odot$ to be a commutative operation and $\otimes$
not be and also establish that $\odot$ has priority over $\otimes$,
that is: 
\[
G\otimes H\odot K=G\otimes(H\odot K)
\]

Notice we have defined two different ways of composing: $\otimes$ and $\odot$. 
That is, we consider sets $MG^{\otimes C}$ of concatenations in the form 
$G_1\oslash^{1}\cdots\oslash^{k-1}G_k$ with $\oslash^{i}\in\{\otimes,\odot\}$ 
for $\left|C\right|=k$ and $i=1,\dots,k-1$.
Also notice that, with this notation, we have obviated the interactions between 
layers which are present by the tensor product, but not explicitly: we just 
take into account the case when the relation between layers dissapear by means 
of the composition operation $\odot$.

\begin{example}\label{ex1}
For $k=3$ we have the concatenations 
\[
\begin{array}{c}
MG^{\otimes C} :=\{G\otimes H\otimes K,G\otimes K\otimes H,H\otimes G\otimes K,H\otimes K\otimes G,K\otimes G\otimes H,K\otimes H\otimes G,\\
G\odot H\otimes K,G\odot K\otimes H,H\odot K\otimes G,G\otimes H\odot K,H\otimes G\odot K,K\otimes G\odot H,G\odot H\odot K\}
\end{array}
\]
\end{example}

The following example illustrates the composition operation $\odot$:
\begin{example}
For $n=3,m=4,s=q=2$ and $p=3$:

\vspace{2em}
\hspace{6em}
\begin{tikzpicture}[auto,node distance=2cm,scale=0.50, transform shape]  
\begin{scope}
\tikzstyle{every state}=[fill=black,draw=none,text=white,minimum size = 2pt]
  \node[state]         (A)                    {$1$};   
  \node[state]         (B) [above right of=A] {$2$};       
  \node[state]         (C) [right of=B]       {$3$};   
\draw[line width=1.6pt] (A) edge[color=blue, bend left] (B); 
\draw[line width=1.6pt] (A) edge[color=blue] (B); 
\draw[line width=1.6pt] (A) edge[color=red,bend right] (B);
\draw[line width=1.6pt] (B) edge[color=blue] (C);
\end{scope}
\end{tikzpicture}
\begin{tikzpicture}[auto,node distance=2cm,thick,scale=0.50, transform shape]
\begin{scope}

\tikzstyle{every state}=[fill=black,draw=none,text=white,minimum size = 2pt]
  \node[state]         (A)                    {$1$}; 
  \node[state]         (B) [above right of=A] {$2$};       
  \node[state]         (C) [right of=B]       {$3$};
  \node[state]         (D) [below right of=C]       {$4$};  
\draw[line width=1.6pt] (A) edge[color=green]            (B); 
\draw[line width=1.6pt] (A) edge[color=yellow] (C); 
\draw[line width=1.6pt] (C) edge[color=yellow,bend left]  (D);
\draw[line width=1.6pt] (C) edge[color=green]            (D);
\end{scope}
\node[above left=2em]{$\mathlarger{\mathlarger{\mathlarger{\mathlarger{\mathlarger{\odot}}}}}$};
\end{tikzpicture}
\begin{tikzpicture}[auto,node distance=2cm,scale=0.50, transform shape]  
\begin{scope}
\tikzstyle{every state}=[fill=black,draw=none,text=white,minimum size = 2pt]
  \node[state]         (A)                    {$1$}; 
  \node[state]         (B) [above right of=A] {$2$};       
  \node[state]         (C) [right of=B]       {$3$};
  \node[state]         (D) [below right of=C]       {$4$};  
\draw[line width=1.6pt] (A) edge[color=red, bend right=6]            (B); 
\draw[line width=1.6pt] (A) edge[color=yellow] (C); 
\draw[line width=1.6pt] (C) edge[color=yellow,bend left]  (D);
\draw[line width=1.6pt] (C) edge[color=green]            (D);
\draw[line width=1.6pt] (A) edge[color=blue, bend left=20] (B); 
\draw[line width=1.6pt] (A) edge[color=blue, bend left=8] (B); 
\draw[line width=1.6pt] (A) edge[color=green,bend right=16] (B);
\draw[line width=1.6pt] (B) edge[color=blue] (C);
\end{scope}
\node[above left=2em]{$\mathlarger{\mathlarger{\mathlarger{\mathlarger{\mathlarger{=}}}}}$};
\end{tikzpicture}

\vspace{1em}
\hspace{-1.5em}Note that new colors appear in a layer after more applications of $\odot$.
\end{example}

%%%%%%%%%%%%%%%%%%%%%%%%%%%%%%%%%%%%%%%%%%%%%%%%%%%%%%%%
\section{The partial ordered structure}\label{pomonoid}
%%%%%%%%%%%%%%%%%%%%%%%%%%%%%%%%%%%%%%%%%%%%%%%%%%%%%%%%

Operation $\odot$ defined in previous section can be seen as 
an accumulation of vertices and edges of two given layers that 
becomes a new layer with more colors than the original ones. 
For example, given the multilayers 
$G\otimes H\otimes K,G\odot H\otimes K\in MG^{\otimes C}$,
we understand that $G\odot H\otimes K$ is, in some sense, 
over or below from $G\otimes H\otimes K$. By convention we 
say that $G\otimes H\otimes K\le G\odot H\otimes K$, since
we consider that $G\odot H$ is more complex, in some sense, 
than $G\otimes H$. Let us formalize this idea.\\

A partially ordered set or a \emph{poset} is a set with a binary operation
$\le$ wich is reflexive, antisymmetric and transitive (see e.g. \cite{Birk}).
We define the relation $\le$ in $MG^{\otimes C}$ by ordering the concatenations of 
multigraphs as given in the following. Let $k=|C|$ for the rest of the section.
\begin{defn}
Given $G_{1}\oslash^{1}\cdots\oslash^{k-1}G_{k}$ and $G_{1}\ominus^{1}\cdots\ominus^{k-1}G_{k}$
in $MG{}^{\otimes C}$
with $\oslash^{i},\ominus^{i}\in\{\otimes,\odot\}$ for $i=1,\dots,k-1$
we write
\[
G_{1}\oslash^{1}\cdots\oslash^{k-1}G_{k}\le G_{1}\ominus^{1}\cdots\ominus^{k-1}G_{k}
\]
if and only if there is no $i\in\{1,\dots,k-1\}$ such that $\oslash^{i}=\otimes$
and $\ominus^{i}=\odot$.
\end{defn}

This partial order allows us to define the following mappings. In order to simplify 
the notation, we sometimes will use lowercase letters as multilayers of $MG^{\otimes C}$.
\begin{defn}
Let the mapping $f_{j}\colon MG^{\otimes C}\to MG^{\otimes C}$:
\[
f_{j}(x)=\begin{cases}
G_{1}\oslash^{1}\cdots G_{j}\odot G_{j+1}\cdots\oslash^{k-1}G_{k} & 
\textrm{if }x=G_{1}\oslash^{1}\cdots G_{j}\otimes G_{j+1}\cdots\oslash^{k-1}G_{k}\\
x & \textrm{otherwise}
\end{cases}
\]
for $j=1,\dots,k-1$. We say that 
$x,y\in MG^{\otimes C}$ are \emph{comparable through $f_j$} if $f_j(x)=y$.
\end{defn}

By adding $f_0$ as the identity, it is easy to see that $f_{j}$ are order-preserving.
For the sake of clarity we use the notation $f_{j}$ for any mapping defined above,
avoiding the list of indexes. These mappings will be useful in the sequel, the next example illustrates how these functions 
work and describe, in some sense, a flow on $MG^{\otimes C}$ as a poset.

\begin{example}\label{ex2}
For the elements in Example \ref{ex1} we have: 
{
\[
\xymatrix{ &  & & G\odot H\odot K\\
G\odot H\otimes K\ar[urrr]|{f_{2}} & G\odot K\otimes H\ar[urr]|{f_{2}} & 
H\odot K\otimes G\ar[ur]|{f_{2}} & G\otimes H\odot K\ar[u]|{f_{1}} & 
H\otimes G\odot K\ar[ul]|{f_{1}} & K\otimes G\odot H\ar[ull]|{f_{1}}\\
G\otimes H\otimes K\ar[u]|{f_{1}}\ar@/_{4pc}/[urrr]|{f_{2}} & 
G\otimes K\otimes H\ar[u]|{f_{1}}\ar@/_{5pc}/[urr]|{f_{2}} & 
H\otimes G\otimes K\ar[ull]|{f_{1}}\ar[urr]|(.24){f_{2}} & 
H\otimes K\otimes G\ar[ul]|(.32){f_{1}}\ar[ur]|{f_{2}} & 
K\otimes G\otimes H\ar@/^{4pc}/[ulll]|{f_{1}}\ar[ur]|{f_{2}} & 
K\otimes H\otimes G\ar@/^{4pc}/[ulll]|{f_{2}}\ar[u]|{f_{2}}
}
\]
}

From the example above we extract two immediate results.
The first one establishes that one can obtain the top element after
an action of every $f_{j}$ over a given concatenation whatever ordering could
be and the second that $f_{j}$ are increasing.
\end{example}
\begin{prop}
$f_{i_{1}}\cdots f_{i_{k}}(G_{1}\oslash^{1}\cdots\oslash^{k-1}G_{k})=G_{1}\odot\cdots\odot G_{k}$
for $i_{1}<\cdots<i_{k}$ a permutation of $1,\dots,k$.
\end{prop}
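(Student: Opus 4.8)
The plan is to argue by induction on $k=|C|$, exploiting the fact that each $f_j$ only ever modifies the single connective $\oslash^j$, changing it from $\otimes$ to $\odot$, and leaves every other connective untouched. First I would record the following local fact directly from the definition of $f_j$: for any concatenation $x=G_1\oslash^1\cdots\oslash^{k-1}G_k$ and any index $j$, the $i$-th connective of $f_j(x)$ equals $\odot$ if $i=j$ or if the $i$-th connective of $x$ was already $\odot$, and equals the $i$-th connective of $x$ otherwise. In particular $f_j$ and $f_{j'}$ act on disjoint coordinates of the connective-tuple when $j\neq j'$, so they commute, and moreover once a coordinate has been set to $\odot$ no later application of any $f_{i}$ can change it back.

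With that observation the result is essentially bookkeeping. Since $i_1<\cdots<i_k$ is a permutation of $1,\dots,k$, the multiset $\{i_1,\dots,i_k\}$ is exactly $\{1,\dots,k\}$; but there are only $k-1$ connectives, so in fact the hypothesis as written forces $k$ indices drawn from a $(k-1)$-element set — here I would read the intended statement as: applying the family $f_1,\dots,f_{k-1}$ (in any order, the order being immaterial by commutativity) to an arbitrary concatenation yields $G_1\odot\cdots\odot G_k$. Granting that reading, I would compose the $f_j$'s one at a time: after applying $f_{i_1}$ the $i_1$-th connective is $\odot$; inductively, after applying $f_{i_1},\dots,f_{i_t}$ every connective with index in $\{i_1,\dots,i_t\}$ is $\odot$ and the rest are unchanged, using the local fact above together with the "no reversion" remark. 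Once all of $1,\dots,k-1$ have appeared, every connective is $\odot$, which is by definition the element $G_1\odot\cdots\odot G_k$. Commutativity of the $f_j$'s shows the final result does not depend on the order $i_1,\dots,i_k$, matching the phrase "whatever ordering could be" in Example~\ref{ex2}.

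The only genuinely delicate point is the discrepancy between "$k$ indices" and "$k-1$ connectives" in the statement; I would resolve it with a single clarifying sentence (either $f_k$ is interpreted cyclically or, more naturally, one simply takes the composite $f_{k-1}\cdots f_1$), after which the argument is the routine induction sketched above. Everything else follows from unwinding the definition of $f_j$ and the definition of $\le$, with no computation on the underlying multigraphs needed, since $f_j$ leaves $V$, $E$, $m_G$ and $col_G$ of the composite unchanged except through the prescribed $\otimes\mapsto\odot$ substitution.
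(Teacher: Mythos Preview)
Your argument is correct and is exactly the natural unwinding of the definition of $f_j$; the paper itself gives no proof at all, stating only that this and the next proposition are ``immediate results'' extracted from Example~\ref{ex2}. Your observation that the $f_j$ act on disjoint coordinates of the connective-tuple (hence commute and never revert a $\odot$) is precisely the mechanism the paper leaves implicit, so your write-up is a faithful expansion of what the authors had in mind rather than a different route.

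Your flag about the index count is well taken: as written, ``$i_1<\cdots<i_k$ a permutation of $1,\dots,k$'' forces $i_j=j$, and moreover $f_k$ is never defined (only $f_0,\dots,f_{k-1}$ are). The intended statement is clearly that the composite $f_{\sigma(k-1)}\cdots f_{\sigma(1)}$ sends any concatenation to $G_1\odot\cdots\odot G_k$ for any permutation $\sigma$ of $\{1,\dots,k-1\}$, and your reading and resolution of this are the correct ones. I would drop the remark about a ``cyclic'' interpretation of $f_k$, since nothing in the paper supports it, and simply state the corrected indexing before giving the induction.
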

\begin{prop}
$f_{j}(G_{1}\oslash^{1}\cdots\oslash^{k-1}G_{k})\geq G_{1}\oslash^{1}\cdots\oslash^{k-1}G_{k}$.
\end{prop}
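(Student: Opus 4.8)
The plan is to show that $f_j$ is increasing by a direct comparison of symbol-tuples, using the definition of the partial order. Write $x = G_1 \oslash^1 \cdots \oslash^{k-1} G_k$ with $\oslash^i \in \{\otimes, \odot\}$. There are two cases, matching the case split in the definition of $f_j$.

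First I would dispose of the trivial case: if the $j$-th connective $\oslash^j$ is already $\odot$ (or if $x$ is not of the required form so that the ``otherwise'' branch applies), then $f_j(x) = x$, and $x \le x$ holds by reflexivity of $\le$. Second, I would treat the substantive case $\oslash^j = \otimes$. Here $f_j(x) = G_1 \ominus^1 \cdots \ominus^{k-1} G_k$ where $\ominus^i = \oslash^i$ for all $i \ne j$ and $\ominus^j = \odot$. To apply the definition of $\le$, observe that $x$ and $f_j(x)$ share the same sequence of multigraphs $G_1, \dots, G_k$, so the relation $x \le f_j(x)$ is defined and holds if and only if there is no index $i$ with $\oslash^i = \otimes$ and $\ominus^i = \odot$. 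For $i \ne j$ we have $\oslash^i = \ominus^i$, so no such $i$ can occur among those indices; and for $i = j$ we have $\ominus^j = \odot$ but also $\oslash^j = \otimes$, which would seem to violate the condition. I would therefore check the direction carefully: the definition states $a \le b$ when there is no $i$ with the $a$-connective $\otimes$ and the $b$-connective $\odot$; since here the lower term should be $x$ and the upper term $f_j(x)$, and at position $j$ we pass from $\otimes$ in $x$ to $\odot$ in $f_j(x)$, this is exactly the allowed upgrade, so $x \le f_j(x)$, as desired.

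The main obstacle is purely bookkeeping: being careful about which tuple plays the role of the ``smaller'' element in the asymmetric condition of the order definition, since the forbidden pattern is $\otimes$ on the left and $\odot$ on the right. Once that orientation is pinned down, the proof is immediate from the definitions; no properties of multigraphs, colors, or the operation $\odot$ itself are needed beyond the combinatorial encoding. I would close by remarking that this proposition is the formal counterpart of the upward flow depicted in Example \ref{ex2}, and that iterating it together with the previous proposition recovers the statement that repeated application of the $f_j$ drives every concatenation up to the top element $G_1 \odot \cdots \odot G_k$.
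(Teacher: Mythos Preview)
The paper states this proposition without proof, so your argument supplies what the authors omit. Your case split and overall strategy are exactly right: the result is immediate from the definitions once the direction of the order condition is pinned down.

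However, your handling of that direction is internally inconsistent. You correctly restate the paper's literal definition --- $a \le b$ iff there is no index where the $a$-connective is $\otimes$ and the $b$-connective is $\odot$ --- and correctly observe that at position $j$ the $x$-connective is $\otimes$ while the $f_j(x)$-connective is $\odot$, which is precisely that forbidden pattern. You then declare ``this is exactly the allowed upgrade,'' which contradicts what you just wrote.

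The real source of the trouble is that the paper's definition of $\le$ appears to contain a typo: as printed, it forces $G\odot H \le G\otimes H$ and makes $G_1\odot\cdots\odot G_k$ a bottom rather than a top, contrary to the convention announced two paragraphs earlier and to Lemma~\ref{lem1}. The intended condition is evidently that $\oslash G \le \ominus G$ iff there is no $i$ with $\oslash^i=\odot$ and $\ominus^i=\otimes$ (no downgrade from $\odot$ to $\otimes$). Under that intended reading your argument goes through cleanly: for $i\ne j$ the connectives agree, and at $i=j$ the transition $\otimes\to\odot$ is permitted, giving $x\le f_j(x)$. Rather than silently switching interpretations mid-argument, you should flag the typo explicitly and then proceed with the corrected definition.
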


To the aim of simplicity we will focus our study to a fixed set of 
multilayers/multigraphs. Let us fix a list of multigraphs 
$G(k):=(G_1,\dots,G_k) \in (MG^{\otimes C})^k$ and denote 
$$
\bigcirc G(k):=\{G_{1}\oslash^{1}\cdots\oslash^{k-1}G_{k}:
\oslash^{i}\in\{\otimes,\odot\}\}\,.
$$
Notice that $MG^{\otimes C}=\bigcup_{k=|C|} \{\bigcirc G(k):G(k)\in (MG^{\otimes C})^k\}$ and moreover that such subsets of $MG^{\otimes C}$ are invariant by $f_j$, i.e. 
$f_j(\bigcirc G(k))\subseteq\bigcirc G(k)$. Hence, from these comments
we deduce that $f_j|_{\bigcirc G(k)}\colon\bigcirc G(k)\to\bigcirc G(k)$
is well defined and from now on we understand $f_j$ as $f_j|_{\bigcirc G(k)}$
for some $\bigcirc G(k)$.\\

Let $P$ be a poset. We say that $b \in P$ is a \emph{bottom} element if 
$b\le x$ for every $x \in P$ and $a\in P$ is a \emph{top} element
if $a\ge x$ for every $x\in P$ (see \cite{Birk}). 
\begin{lem}\label{lem1}
$\bigcirc{G(k)}$ is a partial ordered set with top $G_{1}\odot\cdots\odot G_{k}$.
\end{lem}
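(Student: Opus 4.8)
The plan is to verify that the relation $\le$ restricted to $\bigcirc G(k)$ is a partial order and then exhibit $G_1\odot\cdots\odot G_k$ as its greatest element. First I would check reflexivity, antisymmetry and transitivity directly from the definition of $\le$. Reflexivity is immediate since for $x=G_1\oslash^1\cdots\oslash^{k-1}G_k$ there is trivially no index $i$ with $\oslash^i=\otimes$ and $\oslash^i=\odot$ simultaneously. For antisymmetry, suppose $x=G_1\oslash^1\cdots\oslash^{k-1}G_k\le y=G_1\ominus^1\cdots\ominus^{k-1}G_k$ and $y\le x$; the first inequality forbids any $i$ with $\oslash^i=\otimes,\ominus^i=\odot$, and the second forbids any $i$ with $\ominus^i=\otimes,\oslash^i=\odot$, so for every $i$ we must have $\oslash^i=\ominus^i$, hence $x=y$. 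For transitivity, if $x\le y\le z$ with connectives $\oslash^i,\ominus^i,\diamond^i$ respectively, then an index $i$ witnessing failure of $x\le z$ would have $\oslash^i=\otimes$ and $\diamond^i=\odot$; inspecting $\ominus^i$ gives a contradiction with either $x\le y$ (if $\ominus^i=\odot$) or $y\le z$ (if $\ominus^i=\otimes$).

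Next I would observe that $\bigcirc G(k)$ is closed under comparisons in $MG^{\otimes C}$ in the sense that $\le$ only ever relates concatenations sharing the same underlying list $G(k)$ in the same order, so the restriction is genuinely a partial order on the set $\bigcirc G(k)$ as defined. Then, to identify the top element, let $T:=G_1\odot\cdots\odot G_k\in\bigcirc G(k)$, i.e. the concatenation with all connectives equal to $\odot$. For an arbitrary $x=G_1\oslash^1\cdots\oslash^{k-1}G_k\in\bigcirc G(k)$, the requirement $x\le T$ asks that there be no index $i$ with $\oslash^i=\otimes$ and (the corresponding connective of $T$) $=\odot$; but since every connective of $T$ is $\odot$, such an index would force $\oslash^i=\otimes$, which is perfectly consistent — so the only way $x\le T$ could fail is via the clause itself, and one checks the clause cannot be triggered because... actually the correct reading is that $x\le T$ holds precisely when we cannot find $i$ with $\oslash^i=\otimes$ turning into $\odot$, and this is never an obstruction: the definition only rejects the pair when we go \emph{from} $\odot$ \emph{to} $\otimes$ reading $x$ against $T$. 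I would therefore restate cleanly: $x\le T$ iff for all $i$, $\oslash^i=\odot$ whenever $T$'s $i$-th connective is $\otimes$; vacuously true. Hence $x\le T$ for all $x$, so $T$ is a top element, and it is unique by antisymmetry.

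The main obstacle, such as it is, lies entirely in bookkeeping the direction of the implication in the definition of $\le$: the relation is not symmetric in how it treats $\otimes$ versus $\odot$, and it is easy to conflate "$x\le y$" with "$y$ has at least as many $\odot$'s in coordinatewise fashion". I would make this precise once and for all by noting that $x\le y$ holds if and only if, coordinate by coordinate, $\oslash^i_x\preceq\oslash^i_y$ under the two-element order $\otimes\preceq\odot$; with that reformulation in hand, $\bigcirc G(k)$ is order-isomorphic to the product poset $\{\otimes\preceq\odot\}^{k-1}$, reflexivity/antisymmetry/transitivity are inherited from the product, and $T$ corresponds to the all-$\odot$ tuple, which is manifestly the maximum. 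This reformulation is the cleanest route and I would present the proof in that form, relegating the direct verification to a remark.
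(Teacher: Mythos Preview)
Your proof is correct and takes a genuinely different route from the paper's. The paper argues entirely through the mappings $f_j$: reflexivity is witnessed by $f_0=\mathrm{id}$, transitivity by composing the $f_j$, and antisymmetry by observing that two distinct concatenations can never be related both ways through any $f_j$; the top element is implicit from the fact that applying all the $f_j$ lands at $G_1\odot\cdots\odot G_k$. You instead verify the axioms directly from the defining clause and then recast $\bigcirc G(k)$ as the product poset $\{\otimes\preceq\odot\}^{k-1}$, from which reflexivity, antisymmetry, transitivity and the identification of the top are all immediate. Your approach is more elementary and actually yields more structure for free---once the product identification is made, the lattice, distributivity and (semi-)complementation results of the later section follow at once---whereas the paper's approach keeps the $f_j$ in the foreground, which is natural given that those mappings are the main objects of the next two sections.

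One expository point: the passage where you hesitate over whether $x\le T$ holds is caused by an apparent sign slip in the paper's stated definition of $\le$ (read literally it reverses the intended order and would make the all-$\otimes$ element the top). You correctly settle on the intended reading, but your intermediate restatement ``$x\le T$ iff for all $i$, $\oslash^i=\odot$ whenever $T$'s $i$-th connective is $\otimes$'' has the wrong consequent (it should be $\oslash^i=\otimes$); the conclusion survives only because the hypothesis is vacuous for $T$. Drop that paragraph entirely and pass straight to the product-poset formulation, which already does all the work.
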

\begin{proof}
Observe the order of $\bigcirc G(k)$ is described by the mappings $f_j$ 
(see Example \ref{ex2}). Reflexivity is given by $f_0$ while transitivity is 
immediate by definition of the mappings $f_j$. For antisymmetry 
we recall the form of the ordering given in the previous definition, 
now a concatenation can only be compared both ways with another concatenation if they 
are both the same. In that case they are compared by means of the 
same $f_j$ whenever a $\odot$ appears in the $j$-position of the concatenation. 
\end{proof}

\begin{example}
The following diagram illustrates the argument used for the antisymmetry in the proof above:
$$
\xymatrix{G\odot H\otimes K\ar@/^{1pc}/[r]^{f_{1}} & G\odot H\otimes K\ar@/^{1pc}/[l]^{f_{1}}}
$$
while $G\otimes H\odot K$ and $G\odot H\otimes K$  are not comparable through any mapping $f_j$.
\end{example}
Notice that we cannot dualize the above since inverse mappings in  such as $g_{1}$ for which 
\[
g_{1}(G\odot H\otimes K)=G\otimes H\otimes K
\]
lose the well-definedness condition for the non commutativity of $\otimes$.\\

We now prove a notable property that we develop in section below.
\begin{defn}
A \emph{closure mapping} on a poset $P$
is a monotone map $g:P\rightarrow P$ that is 
\begin{enumerate}
\item increasing, i.e. for all $x\in P,gx\geq x$ and 
\item idempotent, i.e. for all $x\in P,g^{2}x=gx$.
\end{enumerate}
\end{defn}
\begin{prop}
The mappings $f_{j}$ are closure mappings.
\end{prop}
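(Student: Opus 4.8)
The plan is to verify the three defining conditions of a closure mapping for each $f_j$, all of which are essentially immediate from the definition of $f_j$ and the partial order on $\bigcirc G(k)$. First I would note that monotonicity was already observed right after the definition of $f_j$ (``it is easy to see that $f_j$ are order-preserving''), so strictly speaking I only need to supply the argument for that claim if I want to be self-contained. The argument is: if $x\le y$ in $\bigcirc G(k)$, then $x$ and $y$ have the same underlying list $G_1,\dots,G_k$ and the connective string of $y$ is obtained from that of $x$ by changing some $\otimes$'s into $\odot$'s; applying $f_j$ to both can only (possibly) change the connective in position $j$ from $\otimes$ to $\odot$, and one checks case-by-case on whether position $j$ already holds $\odot$ in $x$, in $y$, or in neither, that no $\otimes\mapsto\odot$ discrepancy in the ``wrong direction'' is ever created, so $f_j(x)\le f_j(y)$.

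Next I would establish condition (1), that $f_j$ is increasing. This is precisely the content of Proposition~\ref{...} (the one stating $f_j(G_1\oslash^1\cdots\oslash^{k-1}G_k)\ge G_1\oslash^1\cdots\oslash^{k-1}G_k$), so I would simply cite it: by definition $f_j(x)$ is either $x$ itself — in which case $f_j(x)=x\ge x$ by reflexivity — or it is $x$ with the $j$-th connective upgraded from $\otimes$ to $\odot$, and then $x\le f_j(x)$ holds directly from the definition of $\le$, since the only connective that differs is position $j$ and there we pass from $\otimes$ to $\odot$, not the reverse.

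Finally I would prove condition (2), idempotency $f_j^2 = f_j$. Fix $x = G_1\oslash^1\cdots\oslash^{k-1}G_k$ and set $y = f_j(x)$. If $\oslash^j = \odot$, then by definition $y = x$ and the $j$-th connective of $y$ is $\odot$, so $f_j(y) = y$ again, hence $f_j^2(x) = f_j(x)$. If $\oslash^j = \otimes$, then $y$ is $x$ with the $j$-th connective changed to $\odot$; now $y$ has $\odot$ in position $j$, so it falls into the ``otherwise'' case of the definition of $f_j$ and $f_j(y) = y$. In both cases $f_j(f_j(x)) = f_j(x)$, which is what we wanted. Since $x$ was arbitrary, $f_j^2 = f_j$ on all of $\bigcirc G(k)$.

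There is really no main obstacle here: the statement is a bookkeeping consequence of the definitions, and the only mild subtlety is making sure the case analysis for monotonicity is complete (the three sub-cases for what position $j$ looks like in $x$ versus $y$). The slightly delicate conceptual point worth a sentence of comment is that $f_j$ is only well defined after restricting to a fixed $\bigcirc G(k)$ — which the paper has already arranged — because the definition of $f_j$ presupposes a fixed underlying list of multigraphs; I would mention this so the reader knows in which poset the closure property is being asserted.
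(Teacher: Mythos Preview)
Your proof is correct. The paper actually gives no explicit proof of this proposition at all: it simply states it, relying on the immediately preceding remarks that the $f_j$ are order-preserving and on the proposition that $f_j(x)\ge x$, with idempotency left as evident. Your write-up is therefore a fleshed-out version of the same direct verification the paper tacitly invokes, so there is nothing to contrast in terms of approach.
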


The mapping determined by two elements is defined in \cite{Cha} as
\[
f_{a,b}(x)=\left\{ \begin{array}{lll}
b & \mbox{ if }x=a \\
x & \mbox{ otherwise.}
\end{array}\right.
\]
Let us see by an example that these mappings are closely related to 
our mappings $f_j$. If we change elements by tuples we obtain the following example.
\begin{example}\label{ex3}
Let $\vec{a}_j=
(G_1\oslash^1\cdots\oslash^{j-1} G_j\otimes G_{j+1}\oslash^{j+1}\cdots\oslash^{k-1}
 G_k)_{\oslash^i\in\{\otimes,\odot\}}$
and $\vec{b}_j=
(G_1\oslash^1\cdots\oslash^{j-1} G_j\odot G_{j+1}\oslash^{j+1}\cdots\oslash^{k-1}  
G_k)_{\oslash^i\in\{\otimes,\odot\}}$, where the tuples run all the combinations
of $\oslash^i\in\{\otimes,\odot\}$ and  $i$ runs the set $\{1,\dots,k-1\}\setminus\{j\}$,
taking into account that $\odot$ is commutative and $\otimes$ is not commutative.\\

For instance, we get $k=6$, $j=2$ and fix the multigraphs $G_1,\dots,G_6$ all differents. Then 
the set of multilayers with the form $G_1\oslash G_2\otimes G_3\oslash G_4\oslash G_5\oslash G_6$
represent the tuple $\vec{a}$, namely
\begin{equation*}
\begin{split}
\vec{a}=&(G_1\otimes G_2\otimes G_3\otimes G_4\otimes G_5\otimes G_6,
 G_1\odot G_2\otimes G_3\otimes G_4\otimes G_5\otimes G_6,\\
&G_1\otimes G_2\otimes G_3\odot G_4\otimes G_5\otimes G_6,
 G_1\otimes G_2\otimes G_3\otimes G_4\odot G_5\otimes G_6,\\
&G_1\otimes G_2\otimes G_3\otimes G_4\otimes G_5\odot G_6,
 G_1\odot G_2\otimes G_3\odot G_4\otimes G_5\otimes G_6,\\
&G_1\odot G_2\otimes G_3\otimes G_4\odot G_5\otimes G_6,
 G_1\odot G_2\otimes G_3\otimes G_4\otimes G_5\odot G_6,\\
&G_1\otimes G_2\otimes G_3\odot G_4\odot G_5\otimes G_6,
 G_1\otimes G_2\otimes G_3\odot G_4\otimes G_5\odot G_6,\\
&G_1\otimes G_2\otimes G_3\otimes G_4\odot G_5\odot G_6,
 G_1\odot G_2\otimes G_3\odot G_4\odot G_5\otimes G_6,\\
&G_1\odot G_2\otimes G_3\odot G_4\otimes G_5\odot G_6,
 G_1\odot G_2\otimes G_3\otimes G_4\odot G_5\odot G_6,\\
&G_1\otimes G_2\otimes G_3\odot G_4\odot G_5\odot G_6,
 G_1\odot G_2\otimes G_3\odot G_4\odot G_5\odot G_6)\,.
\end{split}
\end{equation*}
And $G_1\oslash G_2\odot G_3\oslash G_4\oslash G_5\oslash G_6$ represents
the tuple $\vec{b}$, so $f_2 = f_{\vec{a},\vec{b}}$.
Observe that we must choose an order for the tuple. 
Also notice that all these elements are different, since
we have choosen all multigraphs different. Taking into account that $\vec{b}$ 
is the same tuple, just changing the second ``$\otimes$'' by 
``$\odot$'' in all entries.
\end{example}

Let us finish the section with two interpretations of the content 
defined so far that can be considered for further developments.
\begin{subsection}{Levels into $\bigcirc G(k)$}

Looking at Example \ref{ex2} we can organize $\bigcirc G(k)$
as a disjoint union of \emph{levels} according to the number of $\odot$
appearing in every concatenation. That is:
\[
\bigcirc G(k)=\bigsqcup_{0\leq l\leq k-1}{\bigcirc G(k)_{l}}
\]
where every $\bigcirc G(k)_{l}$ is the set of all concatenations with exactly
$l$ operators $\odot$ in it. In fact:
\[
f_{j}:\bigcirc G(k)_{l}\longrightarrow \bigcirc G(k)_{l+1}
\]
for which, when composing, we can jump more than one level in one
step by defining 
\[
f_{i}\circ f_{j}=f_{ij}:\bigcirc G(k)_{l}\longrightarrow \bigcirc G(k)_{l+2}
\]
which suggests considering mappings $f_{i_{1}\cdots i_{l}}$ where $i_{j}\in\{1,\dots,k-1\}$ for $1\leq j\leq l$ in the expected way for simultaneous
mergings, this allows to jump various levels at a time into the poset.\\
\end{subsection}
\begin{subsection}{Monads as modalities}
All the content introduced so far can be interpreted in terms of Category Theory as 
follows, where the terminology can be found for instance in \cite{Ben} and \cite{McLane}.
Considering posets as categories, it can be proved that $f_{j}$
are idempotent endofunctors and the fact that they are \emph{monads}. 
Then, one can see $f_{j}$ as (possibility) modalities $\diamondsuit_{j}$ for a 
certain Modal Logic system where we write
$\diamondsuit_{\alpha}=\diamondsuit_{i_{1}}\cdots\diamondsuit_{i_{l}}$
for $\alpha=i_{1}...i_{l}$ all distinct for idempotence. Now we have
a multimodal system where every modality is a conjunction of possible applications 
of functions $f_j$ satisfying the following axioms:
\begin{enumerate}
\item \label{ax1} $\diamondsuit_{\alpha}(x\wedge y)=x\wedge\diamondsuit_{\alpha}y$
\item \label{ax2} $\diamondsuit_{\alpha}(x\vee y)=x\vee\diamondsuit_{\alpha}y$
\end{enumerate}
We deduce the following easy property: 
$\diamondsuit_{\alpha}$ are \emph{strong functors} with the identity
as the strength, since Axiom \eqref{ax2} implies $\diamondsuit_{\alpha}(x\vee y)\geq\diamondsuit_{\alpha}x\wedge y$. 

\end{subsection}

%%%%%%%%%%%%%%%%%%%%%%%%%%%%%%%%%%%%%%%%%%%%%%%%%%%%%%%%%
\section{Interior mappings}\label{interior}
%%%%%%%%%%%%%%%%%%%%%%%%%%%%%%%%%%%%%%%%%%%%%%%%%%%%%%%%%

In section above we show that mappings $f_j$ can be seen as determined by two tuples. Despite our study is for interior mappings, as they are dual of closure mappings, results for $f_j$'s are easily deduced. Interior mappings have important properties for the analysis of posets as is shown in \cite{Yao}. In this section provide conditions for a mapping defined by two elements be interior mapping and so, conditions to be closure mappings.\\

The following definitions can be found in \cite{Cha}.
Let $(P,\le)$ be a poset. Let $A\subset P$, the sets
$\mathcal{L}(A):=\{x\in P:x\le A\}$ and $\mathcal{U}(A):=\{x\in P:x\ge A\}$
are respectively the \emph{lower} and \emph{upper cone} 
of $A$. Let the tuples $\vec{a}:=(a_{1},\dots,a_{n}),\vec{b}:=(b_{1},\dots,b_{n})\in P^{n}$
such that $a_{i}\ne b_{i}$ for $i=1,\dots,n$. The mapping determined
by such tuples is defined as 
\[
f_{\vec{a},\vec{b}}(x)=\left\{ \begin{array}{lll}
b_{i} & \mbox{ if }x=a_{i}\mbox{ for }i=1,\dots,n\\
x & \mbox{ otherwise.}
\end{array}\right.
\]
This mapping is strictly monotone if and only if $\vec{a}$
and $\vec{b}$ are not comparable, $\mathcal{L}(\{{\vec{a}}\})\setminus\{\vec{a}\}\subseteq\mathcal{L}(\{{\vec{b}}\})\setminus\{\vec{b}\}$
and $\mathcal{U}(\{{\vec{a}}\})\setminus\{\vec{a}\}\subseteq\mathcal{U}(\{{\vec{b}}\})\setminus\{\vec{b}\}$
(see \cite[Proposition 3.1]{Cha}). These conditions can be easily
changed to obtain a monotone mapping.\\

A mapping $f\colon P\to P$ is \emph{interior}
when for any $x,y\in P$: $f(x)\le y$ if and only if $f(x)\le f(y)$
(see \cite[Definition 3.1 and Remark 3.2]{Yao}). This definition
is equivalent to the following three axioms for $x,y\in P$: 
\begin{enumerate}
\item Monotonicity: $x\le y$ implies $f(x)\le f(y)$. 
\item Contraction: $f(x)\le x$. 
\item Idempotence: $f(f(x))=x$. 
\end{enumerate}
We say that $P$ is a \emph{bounded
poset} if it has bottom and top elements, it can also be \emph{lower bounded} and \emph{upper bounded}. The product poset $(P^{n},\le)$
is defined by means of the natural order and the fact that $P$ is
a bounded poset implies that $P^{n}$ is bounded again and the top
and bottom are $(a,\dots,a)$ and $(b,\dots,b)$ respectively,
where $a$ and $b$ are bottom and top elements of $P$. 
\begin{prop}
Let $P$ be a lower bounded poset. If $\vec{b}$ is a bottom
of $P^{n}$ and $f_{\vec{a},\vec{b}}$ is monotone, then the mapping
$f_{\vec{a},\vec{b}}$ is interior. 
\end{prop}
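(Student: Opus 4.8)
The plan is to check the three axioms recorded above as equivalent to $f_{\vec{a},\vec{b}}$ being an interior mapping: monotonicity, contraction ($f_{\vec{a},\vec{b}}(x)\le x$) and idempotence ($f_{\vec{a},\vec{b}}^{2}x=f_{\vec{a},\vec{b}}x$). Monotonicity is granted by hypothesis, so only contraction and idempotence remain. The first, and key, step is to pin down $\vec{b}$ explicitly. Since $P$ is lower bounded it has a bottom element, say $0$, and then the natural (coordinatewise) order on $P^{n}$ makes $(0,\dots,0)$ a bottom of $P^{n}$. A bottom element of a poset is unique, so the hypothesis that $\vec{b}=(b_{1},\dots,b_{n})$ is a bottom of $P^{n}$ forces $b_{i}=0$ for every $i=1,\dots,n$. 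Recalling that $f_{\vec{a},\vec{b}}$ is defined only when $a_{i}\ne b_{i}$ for all $i$, this gives $a_{j}\ne 0$ for every $j$; equivalently, $0$ differs from every $a_{j}$, so $0$ is a fixed point of $f_{\vec{a},\vec{b}}$.

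With this in hand, contraction is immediate by the case split in the definition: if $x\notin\{a_{1},\dots,a_{n}\}$ then $f_{\vec{a},\vec{b}}(x)=x\le x$, and if $x=a_{i}$ then $f_{\vec{a},\vec{b}}(x)=b_{i}=0\le a_{i}=x$ because $0$ is the bottom of $P$. For idempotence, note that the only elements not fixed by $f_{\vec{a},\vec{b}}$ are $a_{1},\dots,a_{n}$. If $x$ is fixed then $f_{\vec{a},\vec{b}}^{2}x=f_{\vec{a},\vec{b}}x$ trivially, and if $x=a_{i}$ then $f_{\vec{a},\vec{b}}x=0$, which is fixed by the previous paragraph, so $f_{\vec{a},\vec{b}}^{2}x=f_{\vec{a},\vec{b}}(0)=0=f_{\vec{a},\vec{b}}x$. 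Thus $f_{\vec{a},\vec{b}}$ is monotone, contractive and idempotent, hence interior.

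The argument is short, and the step I would be most careful about is the opening one: one must use that $P$ is lower bounded, which is exactly what forces $P^{n}$ to have a bottom and hence $\vec{b}$ to be the all-bottom tuple rather than merely some minimal element, together with the standing convention $a_{i}\ne b_{i}$ built into the definition of $f_{\vec{a},\vec{b}}$, since that convention is precisely what guarantees the image value $0$ is none of the $a_{j}$ and is therefore fixed. That fixedness is what drives idempotence; everything after it is a routine case check on whether the argument belongs to $\{a_{1},\dots,a_{n}\}$, and, notably, neither contraction nor idempotence uses monotonicity, so there is no circularity with the hypothesis.
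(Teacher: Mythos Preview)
Your proof is correct. You take a different route from the paper: the paper verifies the defining biconditional $f(x)\le y\iff f(x)\le f(y)$ directly by case analysis on whether $f(x)=b$ or $f(x)=x$ (and, for the converse direction, also on $f(y)$), invoking the monotonicity hypothesis in the case $f(x)=x\le y$. You instead check the three equivalent axioms, using monotonicity only as a black-box hypothesis and establishing contraction and idempotence from the observation that every $b_i$ equals the bottom $0$ and that the standing convention $a_i\ne b_i$ forces $0$ to be a fixed point. Your approach is slightly cleaner in that it isolates exactly where each hypothesis is consumed and makes transparent that contraction and idempotence do not themselves rely on monotonicity; the paper's approach has the minor advantage of not needing to appeal to the equivalence between the biconditional and the three axioms.
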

\begin{proof}
Notice that $\vec{b}=(b,\dots,b)$ for the bottom $b$ of
$P$. For the sake of clarity we denote $f:=f_{\vec{a},\vec{b}}$.
Therefore $f(x)=b$ or $f(x)=x$ for every $x\in P$. Assume $f(x)\le y$.
We have two cases: If $f(x)=b$, then $f(x)\le f(y)$, since $b$
is bottom in $P$. If $f(x)=x\le y$, then $f(x)\le f(y)$, since
$f$ is monotone. Now assume $f(x)\le f(y)$. If $f(x)=b$, then $f(x)\le y$
since $b$ is bottom in $P$. If $f(x)=x$ and $f(y)=b$, then $x\le b$,
so necessarily $x=b$. Thus, $f(x)=b\le y$ since $b$ is bottom.
If $f(x)=x$ and $f(y)=y$, then $f(x)\le y$ trivially. 
\end{proof}

Observe that the converse of proposition above is not true in general 
as is shown in the following example. 
\begin{example}
Let $\vec{a}_{j}=(G_{1}\oslash^{1}\cdots\oslash^{j-1}G_{j}\otimes G_{j+1}\oslash^{j+1}\cdots\oslash^{k-1}G_{k})_{\oslash^{i}\in\{\otimes,\odot\}}$
and $\vec{b}_{j}=(G_{1}\oslash^{1}\cdots\oslash^{j-1}G_{j}\odot G_{j+1}\oslash^{j+1}\cdots\oslash^{k-1}G_{k})_{\oslash^{i}\in\{\otimes,\odot\}}$,
where the tuples run all the combinations of $\oslash^{i}\in\{\otimes,\odot\}$
and $i$ runs the set $\{1,\dots,k-1\}\setminus\{j\}$, taking into
account that $\odot$ is commutative and $\otimes$ is not commutative.
Then the mapping $f_{\vec{a}_{j},\vec{b}_{j}}$ is interior, but $\vec{b}$
is not a bottom. What happens is that $b\preceq a$, i.e. $b$ is
covered by $a$ or in other words, there is no elements between $b$
and $a$, formally $b\le a$ and if $x\le a$, then $x\le b$. 
\end{example}
This idea allows us to obtain a better result. Previous lemma
could be useful in case we were not be able to find an element covered
by another. Observe that $\vec{b}\preceq\vec{a}$ if and only if $a_{i}\preceq b_{i}$
for $i=1,\dots,n$. We define the set of the tuple as $\{\vec{a}\}:=\{a_{1},\dots,a_{n}\}$. 
\begin{prop}
Let $P$ be a poset. If $\{\vec{b}\}\cap\{\vec{a}\}=\emptyset$ and
$\vec{b}\preceq\vec{a}$, then the mapping $f_{\vec{a},\vec{b}}$
is interior.
\end{prop}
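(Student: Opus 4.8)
The plan is to verify the three axioms characterizing an interior mapping: monotonicity, contraction, and idempotence, for $f:=f_{\vec{a},\vec{b}}$ under the hypotheses $\{\vec{b}\}\cap\{\vec{a}\}=\emptyset$ and $\vec{b}\preceq\vec{a}$ (equivalently $a_i\succeq b_i$ covers, for each $i$). Contraction is immediate: if $x=a_i$ then $f(x)=b_i\le a_i=x$ because $b_i\preceq a_i$, and otherwise $f(x)=x$. Idempotence is also quick: $f(f(a_i))=f(b_i)=b_i=f(a_i)$ since $b_i\notin\{\vec{a}\}$ by the disjointness hypothesis, so $f$ fixes every $b_i$; and on any other $x$, $f(f(x))=f(x)$ trivially. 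So the real content is monotonicity.

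For monotonicity I would take $x\le y$ and split into cases according to whether $x$ and $y$ lie in $\{\vec{a}\}$. If neither does, $f(x)=x\le y=f(y)$. If $x=a_i$ and $y\notin\{\vec{a}\}$, then $f(x)=b_i\le a_i=x\le y=f(y)$ using $b_i\preceq a_i$ and transitivity. If $x\notin\{\vec{a}\}$ and $y=a_j$, the claim is $f(x)=x\le b_j$; here I use that $a_j$ covers $b_j$, so from $x\le a_j$ and $x\ne a_j$ (as $x\notin\{\vec{a}\}$, in particular $x\neq a_j$) the covering relation forces $x\le b_j$. Finally if $x=a_i$ and $y=a_j$ with $a_i\le a_j$: if $i=j$ then $f(x)=b_i=b_j=f(y)$; if $i\ne j$ then, since $b_j$ is covered by $a_j$ and $a_i\le a_j$ with $a_i\ne a_j$ (the $a$'s being distinct entries), covering gives $a_i\le b_j$, whence $f(x)=b_i\le a_i\le b_j=f(y)$.

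The step I expect to be the main obstacle is the case $x=a_i$, $y=a_j$, $i\neq j$: one must be careful that $a_i\le a_j$ and $a_i\neq a_j$ genuinely hold so that the covering property $a_j\succ b_j$ can be invoked to conclude $a_i\le b_j$. This is where the hypothesis $\vec{b}\preceq\vec{a}$ (read entrywise as $b_j\preceq a_j$) is used in full strength, together with the fact that distinct indices give distinct entries of the tuple $\vec a$ (which is implicit in the definition of $f_{\vec a,\vec b}$, where the $a_i$ are the distinct elements being relabelled). A parallel remark applies to the case $x\notin\{\vec a\}$, $y=a_j$: one needs $x\neq a_j$, which is guaranteed precisely because $x\notin\{\vec a\}$. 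Once these bookkeeping points are in place, the three axioms follow and, by \cite[Definition 3.1 and Remark 3.2]{Yao}, $f_{\vec a,\vec b}$ is interior.

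I would also note in passing that this proposition strictly generalizes the previous one: if $\vec b=(b,\dots,b)$ is the bottom of $P^n$, then $b$ is covered by $a_i$ exactly when there is nothing strictly between, which need not hold in general — so the covering hypothesis $\vec b\preceq\vec a$ replaces the global ``bottom'' condition by a purely local one, and the disjointness $\{\vec a\}\cap\{\vec b\}=\emptyset$ is automatic in the bottom case only when no $a_i$ equals $b$. This framing makes clear why the covering condition is the right abstraction and motivates applying it to the mappings $f_j$ via Example \ref{ex3}.
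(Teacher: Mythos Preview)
Your proof is correct and follows essentially the same route as the paper's: both verify monotonicity, contraction, and idempotence directly, with the covering relation $b_j\preceq a_j$ doing the work in the cases where $y=a_j$ (to push $x$ or $a_i$ below $b_j$) and the disjointness hypothesis $\{\vec a\}\cap\{\vec b\}=\emptyset$ ensuring $f(b_i)=b_i$ for idempotence. Your case split for monotonicity is in fact slightly cleaner than the paper's (which singles out the subcase $x=a_i$, $y=b_j$ unnecessarily), but the content is identical.
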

\begin{proof}
Let us denote $f:=f_{\vec{a},\vec{b}}$. 

\begin{itemize}
\item Monotonicity: Assume $x\le y$ and $i,j\in\{1,\dots,n\}$
such that $i\ne j$. The case $x=y$ is clear from definition of mapping.
Having in mind that $\{\vec{b}\}\cap\{\vec{a}\}=\emptyset$, it is clear that
$f(b_{i})=b_{i}$ for every $i\in\{1,\dots,n\}$, we have the following
cases:
\begin{itemize}
\item If $x=a_{i}$ and $y=b_{j}$, then $f(x)=b_{i}$
and $f(y)=b_{j}$, by hypothesis $b_{i}\le a_{i}=x\le y=b_{j}$, hence
$f(x)\le f(y)$.\\
\item If $x=a_{i}$ and $y=a_{j}$, then $f(x)=b_{i}$ and $f(y)=b_{j}$,
by hypothesis $b_{i}\le a_{i}=x\le y=a_{j}$, but $b_{j}\preceq a_{j}$,
hence $a_{i}\le b_{j}$, so $f(x)\le f(y)$.\\
\item If $x\ne a_{i}$ and $y=a_{j}$, then $f(x)=x$ and $f(y)=b_{j}$,
by hypothesis $x\le y=a_{j}$, but $b_{j}\preceq a_{j}$, hence $x\le b_{j}$,
so $f(x)\le f(y)$.\\
\item The rest of cases brings us to $f(x)=x$ and $f(y)=y$, so
by hypothesis $f(x)\le f(y)$.\\
\end{itemize}
\item Contraction: If $x=a_{i}$ for some $i\in\{1,\dots,n\}$,
$f(x)=b_{i}\le a_{i}$. If $x\ne a_{i}$ for every $i\in\{1,\dots,n\}$,
$f(x)=x$. In both cases $f(x)\le x$.\\
\item Idempotence: If $x=a_{i}$ for some $i\in\{1,\dots,n\}$,
then $f(x)=b_{i}\preceq a_{i}$. Since $\{\vec{b}\}\cap\{\vec{a}\}=\emptyset$,
necessarily $f(b_{i})=b_{i}$, i.e. $f(f(x))=x$. If $x\ne a_{i}$
for all $i\in\{1,\dots,n\}$, is clear that $f(f(x))=x$. 
\end{itemize}
And the proof is ended.
\end{proof}

We can obtain, by duality, versions of propositions above for closure
mappings. We have omitted the proofs, since they are analog to the
ones above. 
\begin{prop}
Let $P$ be a upper bounded poset. If $\vec{b}$ is a top
of $P^{n}$ and $f_{\vec{a},\vec{b}}$ is monotone, then the mapping
$f_{\vec{a},\vec{b}}$ is closure. 
\end{prop}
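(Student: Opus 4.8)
The plan is to obtain this statement purely by duality from the proposition asserting that if $\vec{b}$ is a bottom of $P^{n}$ and $f_{\vec{a},\vec{b}}$ is monotone, then $f_{\vec{a},\vec{b}}$ is interior. Since interior mappings are exactly the order-theoretic duals of closure mappings, and a top of $P^{n}$ in $P$ corresponds to a bottom of $(P^{n})^{\mathrm{op}}$ in $P^{\mathrm{op}}$, the result should follow by passing to the opposite poset. First I would record that if $P$ is upper bounded with top $a$, then $P^{\mathrm{op}}$ is lower bounded with bottom $a$, and the product order dualizes componentwise so that $\vec b=(a,\dots,a)$ is the bottom of $(P^{\mathrm{op}})^{n}=(P^{n})^{\mathrm{op}}$.

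Next I would observe that the mapping $f_{\vec{a},\vec{b}}$ does not change when we pass to $P^{\mathrm{op}}$ as a set-map; only the ambient order changes. Monotonicity of a map is a self-dual property (reversing all inequalities in ``$x\le y\Rightarrow f(x)\le f(y)$'' gives the same statement with the roles of the two points swapped), so $f_{\vec{a},\vec{b}}$ being monotone on $P$ is equivalent to it being monotone on $P^{\mathrm{op}}$. Applying the previously proved proposition to the lower bounded poset $P^{\mathrm{op}}$, whose $n$-th power has bottom $\vec b$, we conclude that $f_{\vec{a},\vec{b}}$ is interior on $P^{\mathrm{op}}$.

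Finally I would translate ``interior on $P^{\mathrm{op}}$'' back into ``closure on $P$''. Using the three-axiom characterization: contraction $f(x)\le_{\mathrm{op}}x$ becomes $f(x)\ge x$ (the increasing condition), idempotence $f(f(x))=x$ in the opposite poset is the same equation but, combined with contraction already giving $f(f(x))=f(x)$ in the present setting, yields idempotence $f^{2}=f$; and monotonicity is again self-dual. Hence $f_{\vec{a},\vec{b}}$ is monotone, increasing and idempotent on $P$, i.e. a closure mapping, which is what we want.

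The only subtle point — and the one I would be careful to state cleanly rather than the ``main obstacle'', since there is none of real substance here — is making sure the dictionary between the two axiom systems is applied correctly: the interior axioms use contraction plus the bare equation $f(f(x))=x$, whereas the closure axioms use increasingness plus $f^{2}x=fx$, so one must note that in the presence of the first axiom these formulations of idempotence coincide. Because this argument is entirely formal and mirrors the earlier proof line by line, I would, as the authors do for the other dual statements, simply remark that the proof is obtained by dualizing the corresponding interior-mapping proposition and omit the routine verification.
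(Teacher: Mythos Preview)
Your proposal is correct and matches the paper's approach exactly: the paper simply states that this proposition is obtained ``by duality'' from the corresponding interior-mapping result and explicitly omits the proof as analogous, which is precisely what you do (with the duality spelled out in more detail). One minor caution: your remark that the interior axiom $f(f(x))=x$ and the closure axiom $f^{2}x=fx$ ``coincide in the presence of the first axiom'' is not literally true (taken at face value, $f(f(x))=x$ together with contraction forces $f$ to be the identity); the paper's ``$f(f(x))=x$'' is evidently a typo for $f(f(x))=f(x)$, and with that reading the dualization is immediate and your argument needs no further justification.
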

\begin{prop}
Let $P$ be a poset. If $\{\vec{b}\}\cap\{\vec{a}\}=\emptyset$ and
$\vec{a}\preceq\vec{b}$, then the mapping $f_{\vec{a},\vec{b}}$
is closure.
\end{prop}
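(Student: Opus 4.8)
The plan is to obtain this statement purely by dualizing the preceding proposition, whose proof establishes that under the hypotheses $\{\vec b\}\cap\{\vec a\}=\emptyset$ and $\vec b\preceq\vec a$ the mapping $f_{\vec a,\vec b}$ is interior, i.e.\ monotone, contractive and idempotent. Working in the opposite poset $P^{\mathrm{op}}$, the covering relation $\vec b\preceq\vec a$ becomes $\vec a\preceq\vec b$, and the three interior axioms turn into the three closure axioms: monotonicity is self-dual, contraction $f(x)\le x$ becomes increasingness $f(x)\ge x$, and idempotence is self-dual. Since the hypothesis $\{\vec b\}\cap\{\vec a\}=\emptyset$ is symmetric in the two tuples, the dual hypotheses are exactly those in the statement to be proved, so the result follows.

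In more detail, I would first note that $f_{\vec a,\vec b}\colon P\to P$ coincides with the mapping determined by the two tuples in the poset $P^{\mathrm{op}}$ with the roles of $\vec a$ and $\vec b$ unchanged as a set-theoretic function; only the order is reversed. Then I would spell out the three verifications, transcribing the earlier proof line by line with $\le$ replaced by $\ge$ throughout. For \emph{monotonicity}, assume $x\le y$ in $P$; running through the cases $x=a_i,\ y=b_j$; $x=a_i,\ y=a_j$; $x\ne a_i,\ y=a_j$; and the remaining case $f(x)=x,\ f(y)=y$, and using $a_j\preceq b_j$ in place of $b_j\preceq a_j$, one gets $f(x)\le f(y)$. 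For the \emph{increasing} axiom: if $x=a_i$ then $f(x)=b_i\ge a_i=x$ by $\vec a\preceq\vec b$, and otherwise $f(x)=x$; in both cases $f(x)\ge x$. For \emph{idempotence}: if $x=a_i$ then $f(x)=b_i$, and since $\{\vec b\}\cap\{\vec a\}=\emptyset$ we have $b_i\notin\{\vec a\}$, whence $f(b_i)=b_i$ and $f(f(x))=x$; if $x\notin\{\vec a\}$ then trivially $f(f(x))=x$.

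Since the paper explicitly states ``We have omitted the proofs, since they are analog to the ones above,'' the cleanest presentation is simply to invoke this duality rather than rewrite all the cases. Concretely, I would write: apply the previous proposition to the dual poset $P^{\mathrm{op}}$; the hypothesis $\vec a\preceq\vec b$ in $P$ is precisely $\vec b\preceq\vec a$ in $P^{\mathrm{op}}$, and $\{\vec b\}\cap\{\vec a\}=\emptyset$ is unchanged, so $f_{\vec a,\vec b}$ is interior as a self-map of $P^{\mathrm{op}}$; but an interior mapping on $P^{\mathrm{op}}$ is exactly a closure mapping on $P$, which is the claim. The only point requiring a word of care — the ``main obstacle'', such as it is — is to make sure the set-function $f_{\vec a,\vec b}$ really is literally the same map when we pass to $P^{\mathrm{op}}$, so that ``interior on $P^{\mathrm{op}}$'' translates cleanly into ``closure on $P$''; once that bookkeeping is observed, there is nothing further to prove.

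\begin{proof}
Consider the opposite poset $P^{\mathrm{op}}$, which has the same underlying set as $P$ with the reversed order. The set-theoretic map $f_{\vec a,\vec b}$ is unchanged under this passage. In $P^{\mathrm{op}}$ the covering relation $\vec a\preceq\vec b$ of $P$ becomes $\vec b\preceq\vec a$, while the hypothesis $\{\vec b\}\cap\{\vec a\}=\emptyset$ is symmetric and hence still holds. By the previous proposition applied to $P^{\mathrm{op}}$, the mapping $f_{\vec a,\vec b}$ is interior on $P^{\mathrm{op}}$: it is monotone, satisfies $f(x)\le_{\mathrm{op}} x$, and is idempotent. Translating back to $P$, monotonicity and idempotence are self-dual, while $f(x)\le_{\mathrm{op}} x$ becomes $f(x)\ge x$; that is, $f_{\vec a,\vec b}$ is monotone, increasing and idempotent on $P$, hence a closure mapping.
\end{proof}
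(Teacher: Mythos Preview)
Your proposal is correct and matches the paper's approach exactly: the paper omits the proof as dual to the preceding interior-mapping proposition, and your formal argument via $P^{\mathrm{op}}$ makes this duality precise. The only quibble is that your informal case-by-case sketch for monotonicity lists the original cases rather than their proper duals (e.g.\ the dual of ``$x=a_i,\ y=b_j$'' should read ``$x=b_i,\ y=a_j$''), but you rightly set this aside in favor of the clean duality argument, which is sound.
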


%%%%%%%%%%%%%%%%%%%%%%%%%%%%%%%%%%%%%%%
\section{The lattice structure}\label{lattice}
%%%%%%%%%%%%%%%%%%%%%%%%%%%%%%%%%%%%%%%

We saw above that we can define a partial order in a set of multilayer networks
and show that this order yields several properties in such a framework.
In this section we go a little further and provide a lattice structure for
$\bigcirc G(k)$. 
A meet (resp. join) semilattice is a poset $(L,\le)$ such
that any two elements $x$ and $y$ have a greatest lower bound (called
meet or infimum) (resp. a smallest upper bound (called join or supremum)),
denoted by $x\wedge y$ (resp. $x\vee y$).
A poset $(L,\le)$ is called a \emph{lattice} and denoted by 
$(L,\le,\wedge,\vee)$ if for every pair of elements we can construct 
into the lattice their meet and their join. These definitions can be 
found for instance in \cite{Birk}.
Let us define a meet and a join operators for the poset $(\bigcirc{G(k)},\leq)$:
\begin{defn}
Given $G_{1}\oslash^{1}\cdots\oslash^{k-1}G_{k}$, 
$G_{1}\ominus^{1}\cdots\ominus^{k-1}G_{k}\in \bigcirc{G(k)}$
(in short $\oslash G$ and $\ominus G$) we write $\oslash G\wedge\ominus G=\olessthan G$
for $G_{1}\olessthan^{1}\cdots\olessthan^{k-1}G_{k}$ such that 
\[
\olessthan^{j}=\begin{cases}
\otimes & \textrm{if }\oslash^{j}=\otimes\textrm{ or }\ominus^{j}=\otimes\\
\odot & \textrm{otherwise}
\end{cases}
\]
and we write $\oslash G\vee\ominus G=\ogreaterthan G$ for $G_{1}\ogreaterthan^{1}\cdots\ogreaterthan^{k-1}G_{k}$
such that 
\[
\ogreaterthan^{j}=\begin{cases}
\odot & \textrm{if }\oslash^{j}=\odot\textrm{ or }\ominus^{j}=\odot\\
\otimes & \textrm{otherwise}
\end{cases}
\]
\end{defn}

It can be easily checked the usual properties of both operations, that is: $x\wedge y\leq x,y$ and for every $z\leq x,y$
one has $z\leq x\wedge y$ and dually: $x\vee y\geq x,y$ and for
every $z\geq x,y$ one has $z\geq x\vee y$ for every $x,y,z\in \bigcirc{G(k)}$.

\begin{prop}
The \emph{absorption laws} are satisfied for every $x,y\in \mathbf{\bigcirc}{G(k)}$:
\begin{itemize}

\item{$x\vee (x\wedge{y})=x$}
\item{$x\wedge (x\vee{y})=x$}
\end{itemize}

\end{prop}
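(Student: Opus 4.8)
The plan is to verify each absorption law by directly computing the relevant operator symbols at every position $j \in \{1,\dots,k-1\}$, using the definitions of $\wedge$ and $\vee$ given just above. Since both operations act positionwise on the concatenation symbols $\oslash^j \in \{\otimes,\odot\}$, it suffices to check the identities on a single position, i.e. to treat $\{\otimes,\odot\}$ as a two-element set and show that the induced binary operations satisfy absorption. I would set up notation by writing $x = G_1\oslash^1\cdots\oslash^{k-1}G_k$ and $y = G_1\ominus^1\cdots\ominus^{k-1}G_k$, and noting that the underlying multigraph list $G(k)$ is common to all elements of $\bigcirc G(k)$, so equality of two elements of $\bigcirc G(k)$ is equivalent to equality of their operator symbols in every position.

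For the first law, $x \vee (x \wedge y) = x$: fix a position $j$. If $\oslash^j = \otimes$, then by the definition of $\wedge$ the $j$-th symbol of $x\wedge y$ is $\otimes$ (since $\oslash^j=\otimes$), and then the $j$-th symbol of $x \vee (x\wedge y)$ is $\otimes$ as well (neither argument has $\odot$ in position $j$), which matches $\oslash^j$. If $\oslash^j = \odot$, then the $j$-th symbol of $x \vee (x\wedge y)$ is $\odot$ immediately, because the definition of $\vee$ produces $\odot$ whenever either argument has $\odot$ in that position, and the first argument $x$ does. In both cases the $j$-th symbol agrees with that of $x$, so $x\vee(x\wedge y)=x$. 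The second law, $x\wedge(x\vee y)=x$, is handled by the dual argument: if $\oslash^j=\odot$ then the $j$-th symbol of $x\vee y$ is $\odot$, hence the $j$-th symbol of $x\wedge(x\vee y)$ is $\odot$; if $\oslash^j=\otimes$ then the $j$-th symbol of $x\wedge(x\vee y)$ is $\otimes$ because $x$ contributes $\otimes$ in position $j$ and $\wedge$ yields $\otimes$ whenever either argument does.

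Alternatively, and perhaps more cleanly, I would observe that the two possibilities $\{\otimes,\odot\}$ at each position form a two-element chain (with $\otimes < \odot$ by the order of the previous section, since replacing $\otimes$ by $\odot$ moves up), and that $\wedge$ and $\vee$ as defined are exactly the coordinatewise $\min$ and $\max$ on the product of $k-1$ copies of this chain; absorption then follows from the fact that any chain, hence any product of chains, is a distributive lattice. I would still spell out the positionwise identities $\min(p,\min(p,q))=p$ and $\max(p,\max(p,q))=p$ restated as $\max(p,\min(p,q))=p$ and $\min(p,\max(p,q))=p$ for $p,q$ in a two-element chain, since that is the actual content. There is essentially no obstacle here: the only thing to be careful about is the bookkeeping that an element of $\bigcirc G(k)$ is determined by its tuple of operator symbols, so that establishing the identity symbol-by-symbol genuinely proves equality of the multilayer networks — and that the commutativity of $\odot$ versus non-commutativity of $\otimes$ plays no role, because $\wedge$ and $\vee$ never reorder the $G_i$.
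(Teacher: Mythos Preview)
Your proof is correct and follows essentially the same approach as the paper: a position-by-position verification using the definitions of $\wedge$ and $\vee$. The paper writes out the combined condition for the $j$-th symbol of $x\vee(x\wedge y)$ and simplifies it logically to the condition defining $x$, whereas you do an equivalent case split on $\oslash^j\in\{\otimes,\odot\}$; you are also a bit more thorough in treating the second law explicitly and in noting the product-of-chains viewpoint, but the underlying argument is the same.
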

\begin{proof}
Let us prove the first assertion. For $x=\ominus G$ and $y=\obar G$ we construct $x\wedge{y}=\oslash G$ such that 
\[
\oslash^{j}=\begin{cases}
\otimes & \textrm{if }\obar^{j}=\otimes\textrm{ or }\ominus^{j}=\otimes\\
\odot & \textrm{otherwise}
\end{cases}
\] and $x\vee (x\wedge{y})=\ogreaterthan G$ as \[
\ogreaterthan^{j}=\begin{cases}
\odot & \textrm{if }\ominus^{j}=\odot\textrm{ or (}\obar^{j}=\odot\textrm{ and }\ominus^{j}=\odot)\\
\otimes & \textrm{otherwise}
\end{cases}
\] which can be expressed as 
\[
\begin{cases}
\odot & \textrm{if }\ominus^{j}=\odot\\
\otimes & \textrm{otherwise}
\end{cases}
\]
and becomes the same assignation considered for $x=\ominus G$.
\end{proof}

Let us recall that a \emph{minimal element} into a 
poset is an element such that it is not greater than any other element in the poset.
\begin{prop}
$(\bigcirc{G(k)},\leq,\wedge,\vee,1_{\odot},m_{\pi})$ is
an upper-bounded lattice where:
\begin{itemize}
\item $1_{\odot}=G_{1}\odot\cdots\odot G_{k}$ is the top and 
\item $m_{\pi}=G_{\pi(1)}\otimes\cdots\otimes G_{\pi(k)}$ are $k!$ 
minimal elements for $\pi$ a permutation of the set $\{1,\dots,k\}$.
\end{itemize}
\end{prop}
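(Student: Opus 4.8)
The plan is to verify the three asserted facts in turn: that $(\bigcirc G(k),\le,\wedge,\vee)$ is a lattice, that $1_\odot$ is the top, and that the $k!$ concatenations $m_\pi$ are exactly the minimal elements. The lattice property is essentially bookkeeping: by the preceding definition, $\wedge$ and $\vee$ are well-defined binary operations on $\bigcirc G(k)$, and the remark just before this proposition already records that $x\wedge y\le x,y$ with $z\le x,y\implies z\le x\wedge y$, and dually for $\vee$. Together with Lemma \ref{lem1}, which gives that $\bigcirc G(k)$ is a poset, this is exactly the definition of a lattice; one may additionally note commutativity, associativity and idempotence of $\wedge,\vee$ fall out immediately from the corresponding properties of the position-wise rules ``$\otimes$ wins for $\wedge$'' and ``$\odot$ wins for $\vee$'', mirroring the Boolean operations AND/OR if one encodes $\otimes\mapsto 0$, $\odot\mapsto 1$ at each of the $k-1$ positions.

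Next I would identify $\bigcirc G(k)$ order-theoretically with the Boolean-type poset $\{\otimes,\odot\}^{k-1}$ under the coordinatewise order $\otimes\le\odot$: the defining relation $\oslash G\le\ominus G$ iff there is no $i$ with $\oslash^i=\otimes$ and $\ominus^i=\odot$ says precisely that $\oslash^i\le\ominus^i$ in each coordinate. Under this identification the top element is the all-$\odot$ string, i.e. $G_1\odot\cdots\odot G_k$, which is $1_\odot$; this also re-proves the ``top'' clause of Lemma \ref{lem1} and shows $\bigcirc G(k)$ is upper-bounded. For minimality, an element is minimal iff its coordinate string has no coordinate that could be lowered, i.e. has no coordinate equal to $\odot$ — hence the all-$\otimes$ strings are the minimal elements. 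The subtlety, and the step I expect to be the real content, is that a single string $\otimes\otimes\cdots\otimes$ corresponds not to one element of $\bigcirc G(k)$ but to $k!$ distinct multilayers $G_{\pi(1)}\otimes\cdots\otimes G_{\pi(k)}$, because $\otimes$ is non-commutative while $\odot$ is commutative (as emphasized after Example \ref{ex1} and in Example \ref{ex3}); so the fixed list $G(k)=(G_1,\dots,G_k)$ actually generates several all-$\otimes$ concatenations, one per reordering of the underlying multigraphs, and each of them is a minimal element.

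Concretely, for the minimality claim I would argue: $(i)$ each $m_\pi$ is minimal, since if $x\le m_\pi$ then in every coordinate $\oslash_x^i\le\otimes$, forcing $\oslash_x^i=\otimes$, so $x$ is an all-$\otimes$ concatenation on the same reordering, hence $x=m_\pi$; there is nothing strictly below it. $(ii)$ Conversely, if $y=G_1\oslash^1\cdots\oslash^{k-1}G_k$ is not of the form $m_\pi$, then some $\oslash^j=\odot$, and applying the inverse-type move that replaces that $\odot$ by $\otimes$ (legitimate precisely when we keep a witnessing ordering of the merged nodes, cf.\ the $g_1$ discussion after Example following Lemma \ref{lem1}) produces an element strictly below $y$, so $y$ is not minimal. $(iii)$ Finally, the $m_\pi$ are pairwise distinct for distinct $\pi$ because $\otimes$ is non-commutative and the $G_i$ are treated as formally distinct entries of the list, giving exactly $k!$ minimal elements. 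The only place one must be slightly careful is $(ii)$: the map lowering a $\odot$ to $\otimes$ is not globally well defined on $\bigcirc G(k)$ (this is exactly why we could not dualize Lemma \ref{lem1}), but for the minimality argument we do not need a well-defined function — we only need the \emph{existence} of one strictly smaller element below $y$, which is immediate from the coordinatewise description of $\le$ together with the fact that merged layers can be split back into an ordered $\otimes$-concatenation; this is the point I would expect a careful referee to probe, so I would spell it out rather than leave it implicit.
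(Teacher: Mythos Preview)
Your argument is correct and substantially more detailed than the paper's own proof, which consists of the single sentence ``Check that $x\wedge 1_\odot=x$, $x\vee 1_\odot=1_\odot$ and $x\wedge m_\pi=m_\pi$, $x\vee m_\pi=x$.'' The paper thus reduces everything to four coordinate-wise identities and leaves their verification to the reader, while you take a structural route: identify the order on $\bigcirc G(k)$ with (copies of) the Boolean cube $\{\otimes,\odot\}^{k-1}$, read off the top as the all-$\odot$ string, and then argue minimality directly via your steps (i)--(iii). Your approach makes explicit \emph{why} there are $k!$ minimal elements---the non-commutativity of $\otimes$ forces the all-$\otimes$ layer to split into $k!$ distinct concatenations---which the paper's bare identities do not address; indeed, read literally for all $x$, the equations $x\wedge m_\pi=m_\pi$ and $x\vee m_\pi=x$ would assert that each $m_\pi$ is a \emph{bottom} rather than merely minimal, so your (i)--(iii) is the cleaner treatment of that point. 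One small slip to tidy: the clause you quote, ``no $i$ with $\oslash^i=\otimes$ and $\ominus^i=\odot$'', is literally the reverse of the coordinatewise relation $\oslash^i\le\ominus^i$ under $\otimes\le\odot$ (this appears to be a typo already present in the paper); since you use the intended orientation throughout, it does not affect the validity of your proof.
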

\begin{proof}
Check that $x\wedge1_{\odot}=x,x\vee1_{\odot}=1_{\odot}$
and $x\wedge m_{\pi}=m_{\pi},x\vee m_{\pi}=x$.
\end{proof}
\begin{prop}
$(\bigcirc{G(k)},\leq,\wedge,\vee,1_{\odot},m_{\pi})$ is
distributive.
\end{prop}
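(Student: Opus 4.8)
The plan is to reduce the lattice identity to a pointwise (coordinatewise) statement about the operators $\oslash^{j}\in\{\otimes,\odot\}$ and then verify it by a finite case check. Recall from the definition that both $\wedge$ and $\vee$ act componentwise: if $x = G_{1}\oslash^{1}\cdots\oslash^{k-1}G_{k}$, $y = G_{1}\ominus^{1}\cdots\ominus^{k-1}G_{k}$ and $z = G_{1}\obar^{1}\cdots\obar^{k-1}G_{k}$, then the $j$-th operator of $x\wedge(y\vee z)$ and of $(x\wedge y)\vee(x\wedge z)$ depend only on the triple $(\oslash^{j},\ominus^{j},\obar^{j})$. Hence it suffices to check, for each $j\in\{1,\dots,k-1\}$ separately, that the operator produced in position $j$ by $x\wedge(y\vee z)$ equals the operator produced by $(x\wedge y)\vee(x\wedge z)$.

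First I would introduce the obvious Boolean coding: identify $\otimes$ with $0$ and $\odot$ with $1$ in each coordinate. Under this identification the definition of $\wedge$ gives $\min$ in each coordinate (the result is $\otimes$ iff one of the inputs is $\otimes$) and the definition of $\vee$ gives $\max$ in each coordinate (the result is $\odot$ iff one of the inputs is $\odot$). Thus $\bigcirc G(k)$ is, as a lattice, isomorphic to the Boolean cube $(\{0,1\}^{k-1},\le)$ with coordinatewise $\min$ and $\max$ — an explicit order isomorphism being $G_{1}\oslash^{1}\cdots\oslash^{k-1}G_{k}\mapsto(\varepsilon_{1},\dots,\varepsilon_{k-1})$ with $\varepsilon_{j}=1$ iff $\oslash^{j}=\odot$, which is a bijection and respects $\le$ by the order definition in Section \ref{pomonoid} and the componentwise descriptions of $\wedge,\vee$. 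Then distributivity of $\bigcirc G(k)$ follows immediately from the well-known distributivity of the two-element (hence any) Boolean lattice, applied in each coordinate: $\min(a,\max(b,c))=\max(\min(a,b),\min(a,c))$ for $a,b,c\in\{0,1\}$, and dually.

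Alternatively, to keep the argument self-contained, I would just tabulate the eight cases $(\oslash^{j},\ominus^{j},\obar^{j})\in\{\otimes,\odot\}^{3}$ in position $j$ and check $x\wedge(y\vee z)$ and $(x\wedge y)\vee(x\wedge z)$ agree in each; by symmetry of $\wedge,\vee$ this reduces to four cases up to swapping $y$ and $z$, and each is a one-line evaluation of the two displayed formulas in the definition of $\wedge$ and $\vee$. The dual identity $x\vee(y\wedge z)=(x\vee y)\wedge(x\vee z)$ is obtained by the same check or by the lattice-theoretic fact that one distributive law implies the other. There is no real obstacle here: the only thing to be careful about is the remark, already flagged in the antisymmetry discussion of Lemma \ref{lem1} and Example \ref{ex3}, that $\odot$ is commutative while $\otimes$ is not — but since $x$, $y$, $z$ are taken from the \emph{same} $\bigcirc G(k)$ with a fixed list $G(k)=(G_{1},\dots,G_{k})$ and a fixed slot ordering, all elements involved share the same underlying sequence of multigraphs, so non-commutativity of $\otimes$ never forces a reordering and the componentwise reasoning is legitimate. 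Thus the main (and minor) point is simply to state clearly that the computation is slotwise and that $\bigcirc G(k)$ is Boolean-cube-shaped.
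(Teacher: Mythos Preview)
Your proposal is correct and follows essentially the same approach as the paper: both arguments exploit that $\wedge$ and $\vee$ act slotwise on the operator symbols, and verify the distributive identity coordinate by coordinate. The only difference is cosmetic---the paper writes out the two case descriptions for the $j$-th operator of $x_{1}\wedge(x_{2}\vee x_{3})$ and of $(x_{1}\wedge x_{2})\vee(x_{1}\wedge x_{3})$ and observes they coincide, whereas you name the identification with the Boolean cube $\{0,1\}^{k-1}$ explicitly and invoke distributivity of $\{0,1\}$; your remark about the fixed ordering of $G_{1},\dots,G_{k}$ making the slotwise reasoning legitimate is a useful clarification that the paper leaves implicit.
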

\begin{proof}
Let $x_{1}=\oslash G$, $x_{2}=\ominus G$, $x_{3}=\obar G$.
Now $x_{1}\wedge(x_{2}\vee x_{3})=\olessthan G$ where
\[
\olessthan^{j}=\begin{cases}
\otimes & \textrm{if }\ominus^{j}=\otimes\textrm{ or }\obar^{j}=\otimes\textrm{ and }\oslash^{j}=\otimes\\
\odot & \textrm{if }\ominus^{j}=\obar^{j}=\odot\textrm{ or }\oslash^{j}=\odot
\end{cases}
\]
which is exactly the same operator as
\[
\begin{cases}
\otimes & \textrm{if}\textrm{ no }(\oslash^{j}=\odot\textrm{ or }\ominus^{j}=\odot)\textrm{ or }\textrm{no }(\oslash^{j}=\odot\textrm{ or }\obar^{j}=\odot)\\
\odot & \textrm{otherwise}
\end{cases}
\]
for $(x_{1}\wedge x_{2})\vee(x_{1}\wedge x_{3})$.
\end{proof}
\begin{prop}\label{prop2}
Mappings $f_{j}$ preserve meets and joins.
\end{prop}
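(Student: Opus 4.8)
The plan is to unwind the definitions of $\wedge$, $\vee$ and $f_j$ at the level of the operator strings $(\oslash^1,\dots,\oslash^{k-1})$, since all three operations act coordinatewise on this data. Write $x=\oslash G$ and $y=\ominus G$ in $\bigcirc G(k)$. Recall from Proposition~\ref{prop2}'s ambient setup that $f_j$ changes the $j$-th operator from $\otimes$ to $\odot$ (and leaves the string unchanged, i.e.\ acts as the identity, if the $j$-th operator is already $\odot$). In all cases $f_j$ is the coordinatewise map sending position $j$ to $\odot$ and fixing every other position. So to prove $f_j(x\wedge y)=f_j(x)\wedge f_j(y)$ and $f_j(x\vee y)=f_j(x)\vee f_j(y)$ it suffices to check the identity position by position: at positions $i\ne j$ nothing is touched by $f_j$, so both sides agree trivially, and at position $j$ one checks a small Boolean identity.

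First I would set up the Boolean translation: identify $\otimes$ with $0$ and $\odot$ with $1$, so that in coordinate $i$ the meet operator is $\oslash^i\wedge\ominus^i = \min(\oslash^i,\ominus^i)$ (by the definition of $\wedge$: the result is $\otimes$ unless both are $\odot$), the join operator is $\oslash^i\vee\ominus^i=\max(\oslash^i,\ominus^i)$, and $f_j$ replaces coordinate $j$ by $1$, i.e.\ it is the map $t\mapsto \max(t,1)=1$ in that coordinate and the identity elsewhere. Then the join case reduces to the identity $\max(\max(a,b),1)=\max(\max(a,1),\max(b,1))$, which holds for all $a,b\in\{0,1\}$ (both sides equal $1$); and the meet case reduces to $\max(\min(a,b),1)=\min(\max(a,1),\max(b,1))$, which again holds since both sides equal $1$. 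For all coordinates $i\ne j$ the operator $f_j$ acts as the identity, so the $i$-th operator of $f_j(x\wedge y)$, of $f_j(x)\wedge f_j(y)$, of $f_j(x\vee y)$ and of $f_j(x)\vee f_j(y)$ are the obvious matching expressions in $\oslash^i,\ominus^i$. Hence the two concatenations agree in every coordinate and therefore are equal in $\bigcirc G(k)$.

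I would present this as two short displays, one per coordinate-$j$ case, written in the case-style notation already used in the paper (mirroring the proof of distributivity), rather than invoking the Boolean picture explicitly, so as to match the surrounding exposition; but the Boolean reading is what makes the verification immediate. I do not expect a genuine obstacle here: the only thing to be careful about is the convention that $f_j$ is the identity on strings whose $j$-th operator is already $\odot$ (so that $f_j$ is well defined on all of $\bigcirc G(k)$, not just on the bottom level), since otherwise ``preserves meets'' would fail when exactly one of $x,y$ has $\odot$ in position $j$. With that convention in place the statement is a one-line coordinatewise check, and the ``hardest'' part is merely choosing notation consistent with the rest of the section.
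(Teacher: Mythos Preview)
Your argument is correct. Note, however, that the paper states this proposition without proof: it is left as an evident consequence of the coordinatewise definitions, exactly in the spirit you describe. Your Boolean reading (identifying $\otimes$ with $0$, $\odot$ with $1$, so that $\wedge=\min$, $\vee=\max$, and $f_j$ forces coordinate $j$ to $1$) is a clean way to make the verification explicit, and your case-style presentation would match the proofs of absorption and distributivity in the same section. There is nothing to compare against on the paper's side; your write-up would simply supply the omitted routine check.
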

Following the notation of previous section we try some conditions in order to 
find mappings defined by two tuples that also preserve meets and joins.
Let $(L,\le,\wedge,\vee)$ be lattice and define the cartesian product
$(L^n,\le)$ and the meet and join operations defined coordinatewise for it, i.e.
for $\vec{a},\vec{b}\in L^n$ we define 
$\vec{a}\wedge\vec{b}:=(a_1\wedge b_1,\dots,a_n\wedge b_n)$ and
$\vec{a}\vee\vec{b}:=(a_1\vee b_1,\dots,a_n\vee b_n)$
from which one can easily verify the distributive properties.
We need the following property for $\vec{a}\in L^n$:
$$
x\ne \vec{a}\ne y \Longleftrightarrow x\wedge y\ne \vec{a}\,,
$$
that we say $\vec{a}$ is \emph{strictly not absorbing for $\wedge$}.
In an analogous way we define \emph{strictily not absorbing for $\vee$}.
In order to simplify the proof of the following proposition we 
have included the hypothesis $\{\vec{a}\}\cap\{\vec{b}\}=\emptyset$. 
\begin{prop}\label{prop1}
Let $L$ be a lattice. Assume that $\{\vec{a}\}\cap\{\vec{b}\}=\emptyset$.
\begin{enumerate}
\item\label{item1} If $\vec{b}$ is bottom element of $L^n$ and 
$\vec{a}$ is strictily not absorbing for $\wedge$, then mappings 
$f_{\vec{a},\vec{b}}$ preserve meets. 
\item\label{item2} If $\vec{b}$ is top element of $L^n$ and
$\vec{a}$ is strictily not absorbing for $\vee$, then mappings 
$f_{\vec{a},\vec{b}}$ preserve joins. 
\end{enumerate}
\end{prop}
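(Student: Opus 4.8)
The plan is to prove the two statements by a symmetric case analysis, exactly mirroring the structure used in the proof that $f_{\vec a,\vec b}$ is interior when $\vec b$ is bottom; I will write out the details only for item~\eqref{item1}, since item~\eqref{item2} follows by the order-dual argument (replacing $\wedge$ by $\vee$, bottom by top, and reversing $\le$). Write $f:=f_{\vec a,\vec b}$ and recall that, because $\vec b$ is the bottom of $L^n$, we have $b_i=b$ for all $i$, where $b$ is the bottom of $L$; moreover $\{\vec a\}\cap\{\vec b\}=\emptyset$ forces $f(b)=b$. Thus for any $x\in L$, either $x=a_i$ for exactly one $i$ (the $a_i$ are distinct, being the entries of a tuple on which $f$ acts coordinate-wise — if the paper allows repetitions among the $a_i$ one notes they must then map to the same $b$, so no ambiguity arises) and $f(x)=b$, or $x\notin\{\vec a\}$ and $f(x)=x$.

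First I would fix $x,y\in L$ and aim to show $f(x\wedge y)=f(x)\wedge f(y)$, splitting into cases according to whether $x$ and $y$ lie in $\{\vec a\}$. If neither $x$ nor $y$ is in $\{\vec a\}$, then by the hypothesis that $\vec a$ is strictly not absorbing for $\wedge$ we get $x\wedge y\notin\{\vec a\}$, so all three values are fixed by $f$ and the identity $x\wedge y=x\wedge y$ is what we want. If $x=a_i$ and $y=a_j$ (possibly $i=j$), then $f(x)=f(y)=b$, so $f(x)\wedge f(y)=b$; on the other hand $x\wedge y\le a_i\le $ everything is not immediate, so here I use strict non-absorption in the contrapositive: since $x=a_i\in\{\vec a\}$, we have $x\wedge y\ne \vec a$... — more carefully, the needed fact is $f(x\wedge y)=b$, i.e. $x\wedge y\in\{\vec a\}$ or $x\wedge y=b$; and indeed $x\wedge y\le x=a_i$, and if $x\wedge y\notin\{\vec a\}$ we would need $x\wedge y=b$, which need not hold. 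So the clean way is: the condition "$\vec a$ strictly not absorbing for $\wedge$" is $x\ne\vec a\ne y\Leftrightarrow x\wedge y\ne\vec a$; its contrapositive gives that if $x\in\{\vec a\}$ or $y\in\{\vec a\}$ then $x\wedge y\in\{\vec a\}$, hence $f(x\wedge y)=b=f(x)\wedge f(y)$ (using $f(\,\cdot\,)\wedge b=b$ on the right). The remaining case $x=a_i$, $y\notin\{\vec a\}$ (and symmetrically) is handled the same way: $y\in\{\vec a\}$ is false but $x\in\{\vec a\}$ is true, so $x\wedge y\in\{\vec a\}$, giving $f(x\wedge y)=b$, while $f(x)\wedge f(y)=b\wedge y=b$. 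This exhausts all cases.

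The one subtle point — and the step I expect to be the main obstacle — is the reading of the condition "strictly not absorbing for $\wedge$": it must be applied as an equivalence involving the full tuple $\vec a$, not each coordinate $a_i$ separately, and one must be careful that $\{\vec a\}$ denotes the \emph{set} of entries while the absorption hypothesis is phrased with $\vec a$ the \emph{tuple}. The bridge is the elementary observation $\vec b\preceq\vec a\iff a_i\preceq b_i$ coordinatewise, together with the fact noted in the text that membership and the meet/join in $L^n$ are coordinatewise, so that the hypothesis on $\vec a$ transfers to the statements about individual $a_i$ that the case analysis requires; I would state this reduction explicitly as the first line of the proof. Once that is in place, item~\eqref{item2} is obtained verbatim by the order-dual substitution, so I would simply remark "the proof of~\eqref{item2} is dual" rather than repeat it.
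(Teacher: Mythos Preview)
Your case analysis in the second paragraph is correct and is exactly the paper's argument unfolded: the paper simply observes that both $f(x\wedge y)$ and $f(x)\wedge f(y)$ lie in the two-element set $\{\vec b,\,x\wedge y\}$, and that each equals $x\wedge y$ precisely when $x\notin\{\vec a\}$ and $y\notin\{\vec a\}$, which by the biconditional ``strictly not absorbing'' hypothesis is equivalent to $x\wedge y\notin\{\vec a\}$. Your three cases are just this equivalence spelled out, and your treatment of item~\eqref{item2} by duality matches the paper's one-line ``the proof is analogous''.

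The final paragraph, however, is muddled and should be dropped. The covering relation $\preceq$ plays no role in this proposition, and there is no passage to $L^n$ to justify: the map $f_{\vec a,\vec b}$ acts on $L$, and the hypothesis ``$x\ne\vec a\ne y\Leftrightarrow x\wedge y\ne\vec a$'' is to be read (as you in fact read it in your second paragraph) as ``$x\notin\{\vec a\}$ and $y\notin\{\vec a\}\Leftrightarrow x\wedge y\notin\{\vec a\}$''. No further bridge is needed; your own case analysis already used this reading correctly.
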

\begin{proof}
\eqref{item1} Assume $\vec{b}$ is bottom, 
then $\vec{b}\wedge \vec{b} = x \wedge \vec{b} =\vec{b}\wedge y =
\vec{b}$. Observe that $f_{\vec{a},\vec{b}}(x\wedge y)\in\{\vec{b},x\wedge y\}$.
Also $f_{\vec{a},\vec{b}}(x)\in\{\vec{b},x\}$ and
$f_{\vec{a},\vec{b}}(y)\in\{\vec{b},y\}$, thus
$f_{\vec{a},\vec{b}}(x)\wedge f_{\vec{a},\vec{b}}(y)\in\{\vec{b},x\wedge y\}$.
As $\vec{b}$ is bottom $f_{\vec{a},\vec{b}}(x)\wedge f_{\vec{a},\vec{b}}(y)=x\wedge y$
if and only if $x\ne \vec{a}\ne y$, in consequence $x\wedge y\ne \vec{a}$ and 
we can say that $f_{\vec{a},\vec{b}}$ preserve meets.
\eqref{item2} The proof is analogous.
\end{proof}

\begin{subsection}{Complements} 
In \cite{Birk} a \emph{complemented lattice} is defined as a 
bounded lattice (with least element 0 and greatest element 1), 
in which every element $a$ has a \emph{complement}, i.e. an element 
$b$ such that $a \vee {b} = 1$ and $a \wedge {b} = 0$.
Also, given a lattice $L$ and $x\in{L}$ we say that $\hat{x}$ is 
an \emph{orthocomplement of $x$} if the following conditions are satisfied:
\begin{itemize}
\item{$\hat{x}$ is a complement of $x$}
\item{$\hat{\hat{x}}=x$}
\item{if $x\leq{y}$ then $\hat{y}\leq{\hat{x}}$}.
\end{itemize}
A lattice is \emph{orthocomplemented} if every element has an orthocomplement. We give a slightly different approach:

\begin{defn}
We say that an upper bounded lattice $(L,\leq,\wedge,\vee,1)$ with a set of minimal elements $\{m_1,...,m_k\}$ is \emph{semi-orthocomplemented} if every element $a\in {L}$ has a complement, i.e. an element 
$b$ such that $a \vee {b} = 1$ and $a \wedge {b} = m_i$ for a certain $i\in \{1,...,k\}$.
\end{defn}

\begin{prop}
$(\bigcirc{G(k)},\leq,\wedge,\vee,1_{\odot},s_{\pi})$ is a semi-orthocomplemented lattice.
\end{prop}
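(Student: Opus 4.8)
The plan is to exhibit, for each $a\in\bigcirc G(k)$, an explicit complement obtained by ``flipping'' its operator string, and then to read off $a\vee\hat a$ and $a\wedge\hat a$ directly from the positionwise definitions of $\vee$ and $\wedge$. Concretely, write $a=G_1\oslash^1\cdots\oslash^{k-1}G_k$ and set $\hat a:=G_1\ominus^1\cdots\ominus^{k-1}G_k$, where $\ominus^j=\odot$ if $\oslash^j=\otimes$ and $\ominus^j=\otimes$ if $\oslash^j=\odot$. Flipping operators does not disturb the underlying sequence $G_1,\dots,G_k$, so $\hat a$ again lies in $\bigcirc G(k)$ and the assignment is well defined; and by the results already established the ambient structure is an upper-bounded lattice with top $1_\odot=G_1\odot\cdots\odot G_k$ and minimal elements $s_\pi=G_{\pi(1)}\otimes\cdots\otimes G_{\pi(k)}$ for $\pi$ a permutation of $\{1,\dots,k\}$.

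First I would verify $a\vee\hat a=1_\odot$: at each position $j$ the join carries $\odot$ as soon as $\oslash^j=\odot$ or $\ominus^j=\odot$, and by construction exactly one of $\oslash^j,\ominus^j$ equals $\odot$, so every position of $a\vee\hat a$ carries $\odot$, i.e. $a\vee\hat a=G_1\odot\cdots\odot G_k=1_\odot$. Dually, at each position $j$ the meet carries $\otimes$ as soon as $\oslash^j=\otimes$ or $\ominus^j=\otimes$, and again exactly one of the two is $\otimes$, so $a\wedge\hat a$ carries $\otimes$ in every position and hence equals the all-$\otimes$ concatenation over the ordering of the $G_i$'s underlying $a$, which is precisely one of the designated minimal elements $s_\pi$. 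Thus $\hat a$ is a complement of $a$ with $a\wedge\hat a=s_\pi$; since this works for arbitrary $a$, the lattice is semi-orthocomplemented.

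The one point needing care is identifying which $s_\pi$ is reached. Since $\wedge$ and $\vee$ act purely positionwise on the operator string and leave the $G$-ordering untouched, $a\wedge\hat a$ stays in the same ``frame'' as $a$, so $\pi$ is just the permutation indexing that frame — the identity when a single node ordering is fixed, in which case the lattice is genuinely complemented and the prefix ``semi'' matters only once several frames are glued along the common top $1_\odot$. After this positionwise reduction the whole statement is nothing but complementation in the Boolean lattice $\{\otimes,\odot\}^{k-1}$ ordered by $\otimes<\odot$, with $\hat a$ the classical bitwise complement; no appeal to distributivity or to the mappings $f_j$ is required, and one may note in passing that $a\mapsto\hat a$ is in addition an involution and order-reversing, though the definition does not demand this.
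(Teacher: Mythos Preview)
Your proof is correct and follows exactly the same construction as the paper: define $\hat a$ by flipping each operator in the string of $a$. The paper's own proof actually stops right after writing down this flip, without verifying $a\vee\hat a=1_\odot$ or $a\wedge\hat a=m_\pi$; your positionwise verification of both identities, your identification of which minimal element is hit, and your remark on the Boolean-lattice interpretation simply fill in details the paper leaves to the reader.
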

\begin{proof}
For $x=\oslash G$ consider $\hat{x}=\ominus G$ where
\[
\ominus^{j}=\begin{cases}
\otimes & \textrm{if }\oslash^{j}=\odot\\
\odot & \textrm{if }\oslash^{j}=\otimes\,,
\end{cases}
\]
\end{proof}
\end{subsection}

%%%%%%%%%%%%%%%%%%%%%%%%%%%%%%%%%%%%%%%%%%%%%
\begin{subsection}{Ideals into $\bigcirc G(k)$} Now we consider the existence of certain subsets of our lattice in order to show a way to find and organize \emph{autonomous} subsystems into $\bigcirc G(k)$.
\begin{defn}
Given a lattice $(L,\leq,\wedge,\vee)$, $I \subseteq {L}$ is an \emph{ideal} if and only if for every $x,y\in{I}$ it follows that $x\vee{y}\in {I}$.
\end{defn}
It can be also considered an equivalent definition: 
\begin{defn} Given a lattice $(L,\leq)$, $I \subseteq {L}$ is an \emph{ideal} if the following conditions are satisfied:
\begin{itemize}
\item{for every $a\in{I}$ and every $x\in{L}$ such that $x\leq{a}$ then $x\in{I}$}
\item{for every $a,b\in{I}$ there is $c\in{I}$ such that $a,b\leq{c}$}.
\end{itemize}
\end{defn}
One can found these definitions in \cite{Birk}.
\begin{example}
    
    For $k=3$ we can construct the following ideals into $\bigcirc G(3)$:

\begin{itemize}
    \item{$\bigcirc G(3)$ itself is an ideal}
    \item{every subgraph in the form 

    \[
\xymatrix{ & K\otimes G\odot H\\K\otimes G\otimes H\ar[ur]|{f_{2}} & & K\otimes H\otimes G\ar[ul]|{f_{2}} 
}
\]
    is an ideal}
\item{every subgraph in the form 

    \[
\xymatrix{ & K\otimes G\odot H & K\odot H\otimes G\\K\otimes G\otimes H\ar[ur]|{f_{2}}  & K\otimes H\otimes G\ar[u]|{f_{2}}\ar[ur]|{f_{1}} & H\otimes K\otimes G\ar[u]|{f_{1}}
}
\]
    is an ideal}

\item{...}

\end{itemize}
\end{example}

\end{subsection}

%%%%%%%%%%%%%%%%%%%%%%%%%%%%%%%%%%%%%%%%%%%%%%%%%%%%
\section{Lattice-ordered partial monoid}\label{monoid}
%%%%%%%%%%%%%%%%%%%%%%%%%%%%%%%%%%%%%%%%%%%%%%%%%%%%

In this section some concepts from \cite{Jasem} are taken and adapted for the case of a partial operation. Observe that the election of the binary operation is fundamental since it will 
represent the behavior on which we are interested for analyzing.

\begin{defn}
    A system $(A,+,\leq,\wedge,\vee)$ is called a \emph{lattice-ordered partial monoid} if 
    \begin{itemize}
        \item{$(A,+)$ is a partial monoid}
        \item{$(A,\leq)$ is a lattice with $\wedge$ and $\vee$}
        \item{$a\leq{b}$ implies $a+x\leq{b+x}$ and $x+a\leq{x+b}$}
        \item{$a+(b\vee{c})=(a+b)\vee{(a+c)}, (b\vee{c})+a=(b+a)\vee{(c+a)}$} 
        \item{$a+(b\wedge{c})=(a+b)\wedge{(a+c)}, (b\wedge{c})+a=(b+a)\wedge{(c+a)}$}
    \end{itemize}
for every $a,b,c,x\in{A}$. 
\end{defn}

We are introducing a different feature from the operation considered 
in \cite{Jasem} since $+$ defined here is partial, this is oriented to the study 
of $\bigcirc G(k)$ as a lattice-ordered 
partial monoid. For that we need a partial semigroup structure for our set, this is 
obtained by endowing it with the partial operation $+$ defined for 
$x, y \in \bigcirc{G(k)}$ in the form:

\[
x+y=\begin{cases}
y & \textrm{if } x\geq{y}\\
x & \textrm{if }y\geq{x}
\end{cases}
\]

Now $+$ is an associative, commutative and partial operation. 
It is actually a \emph{partial minimum}. The election of this 
operation is due to the idea that the composition of two comparable 
multilayers annihilates the bigger one.

\begin{prop}
$(\bigcirc G(k),+)$ is a partial commutative monoid.
\end{prop}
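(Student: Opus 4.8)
The plan is to verify the defining features of a partial commutative monoid for the operation $+$: that it is well defined on $\bigcirc G(k)$, commutative, associative in the appropriate partial sense, and that it admits a (total, two-sided) identity. The genuinely routine points are commutativity and the identity; the one requiring a little care is associativity, precisely because $+$ is only partially defined.

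First I would note that $+$ is well defined: the two clauses of its definition overlap exactly when $x\ge y$ and $y\ge x$, and then antisymmetry of $\le$ (Lemma~\ref{lem1}) forces $x=y$, so both clauses return the same element. Observe also that $x+y$ is defined precisely when $x$ and $y$ are comparable, and that in that case $x+y=\min\{x,y\}$. Commutativity is then immediate from the symmetric shape of the definition: $x+y$ is defined if and only if $y+x$ is, and both then equal $\min\{x,y\}=\min\{y,x\}$. For the identity I would take the top element $1_{\odot}=G_{1}\odot\cdots\odot G_{k}$ of $\bigcirc G(k)$: since $1_{\odot}\ge x$ for every $x$, the sum $1_{\odot}+x$ falls under the first clause and equals $x$, and by commutativity $x+1_{\odot}=x$ too; in particular $1_{\odot}+x$ and $x+1_{\odot}$ are defined for \emph{every} $x$, so $1_{\odot}$ is a genuine total identity (and $1_{\odot}+1_{\odot}=1_{\odot}$).

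For associativity I would exploit the fact that, whenever $x$ and $y$ are comparable, $x+y$ coincides with the lattice meet $x\wedge y$: the smaller of two comparable elements is the greatest lower bound of the pair. Hence, whenever $(x+y)+z$ and $x+(y+z)$ are both defined, they equal $(x\wedge y)\wedge z$ and $x\wedge(y\wedge z)$ respectively, and these agree by associativity of $\wedge$ in the lattice $\bigcirc G(k)$. (Alternatively one can argue by a short case analysis on which of $x,y,z$ is least, but the reduction to $\wedge$ makes the computation transparent.) This is exactly the associativity demanded of a partial operation: coincidence of the two iterated sums on the common part of their domains of definition.

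The only real obstacle is conceptual: choosing the correct notion of associativity for a partial operation. The strong form — that $(x+y)+z$ be defined if and only if $x+(y+z)$ is — actually fails here; for instance, taking a minimal element $x=G_{\pi(1)}\otimes\cdots\otimes G_{\pi(k)}$ lying below two incomparable elements $y$ and $z$, one has $(x+y)+z=x$ while $y+z$, and therefore $x+(y+z)$, is undefined. So the proof must (and does) establish only the weaker, standard requirement that the two products agree whenever both exist, which is precisely what the identification with $\wedge$ delivers.
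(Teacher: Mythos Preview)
Your argument is correct and in fact more careful than the paper's own proof. Both proofs take the same overall line: $+$ is the partial minimum, the top element $1_{\odot}$ serves as the identity, and associativity reduces to associativity of ``taking the smallest''. The paper simply assumes $x,y,z$ are pairwise comparable (hence form a chain) and writes $\min(\min(x,y),z)=\min(x,\min(y,z))$; you instead observe that on comparable pairs $+$ coincides with the lattice meet $\wedge$ and then invoke associativity of $\wedge$. Your route is slightly more general: it also covers the case where $y\le x$ and $y\le z$ with $x,z$ incomparable, in which both iterated sums are defined (and equal to $y$) even though the three elements are not pairwise comparable --- a case the paper's chain hypothesis formally excludes. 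Your explicit discussion of well-definedness and of the failure of strong associativity is additional care absent from the paper but entirely appropriate here.
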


\begin{proof}
Operation $+$ satisfies associativity: suppose that 
$x,y,z\in \bigcirc G(k)$ are comparable to each other. Now: $$x+y+z=min(x,y,z)=min(min(x,y),z)=min(x,min(y,z))\,.$$
As $\bigcirc G(k)$ is finite, the unique top element (see Lemma \ref{lem1})
is the identity element.
\end{proof}

Let $(M,\cdot)$ be a partial monoid and let $f\colon M\to M$ 
be a mapping. Recall that a \emph{partial homomorphism} between partial 
monoids is mapping that preserves the binary operation, namely 
$f(x+y)=f(x)+f(y)$, $f(1)=1$ and $x+y\in M$ implies that $f(x)+f(y)\in M$.
A mapping between lattice-ordered partial monoids is a \emph{lattice partial 
homomorphism} if it is a partial homomorphism of partial monoids that
preserves meets and joins.
\begin{prop}
Mappings $f_j$ are partial homomorphisms.
\end{prop}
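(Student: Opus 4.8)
The plan is to verify directly the three conditions in the definition of a partial homomorphism of partial monoids, leaning on two facts already in place: each $f_j$ is order-preserving (indeed a closure mapping) and each $f_j$ preserves meets and joins (Proposition \ref{prop2}). The organizing observation is that on a comparable pair the partial operation is simply the minimum, $x+y=\min\{x,y\}$, so that $+$ is nothing but the restriction of $\wedge$ to comparable pairs; this is why the statement will turn out to be essentially free once monotonicity of $f_j$ is invoked.

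First I would check $f_j(1)=1$. The identity of $(\bigcirc G(k),+)$ is the top element $1_\odot=G_1\odot\cdots\odot G_k$, and its operator in position $j$ is already $\odot$, so the concatenation $1_\odot$ falls into the ``otherwise'' branch of the definition of $f_j$; hence $f_j(1_\odot)=1_\odot$.

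Next I would treat definedness and the homomorphism identity together. If $x+y$ is defined then $x$ and $y$ are comparable, say $x\leq y$ (the case $y\leq x$ being symmetric by commutativity); monotonicity of $f_j$ then gives $f_j(x)\leq f_j(y)$, so $f_j(x)$ and $f_j(y)$ are comparable and $f_j(x)+f_j(y)$ is defined, which is the required implication. For the identity, still with $x\leq y$ we have $x+y=x$, while $f_j(x)+f_j(y)=\min\{f_j(x),f_j(y)\}=f_j(x)=f_j(x+y)$; equivalently, once one knows $+$ agrees with $\wedge$ on comparable pairs, this is an instance of Proposition \ref{prop2}. There is no real obstacle here; the only point demanding care is logical order — one must first establish, via monotonicity, that $f_j(x)+f_j(y)$ is defined before writing down the equality $f_j(x+y)=f_j(x)+f_j(y)$, since otherwise its right-hand side is meaningless.
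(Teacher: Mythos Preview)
Your proof is correct and follows essentially the same route as the paper: both use monotonicity of $f_j$ to carry comparability of $x,y$ to comparability of $f_j(x),f_j(y)$, then identify $+$ with $\min$ to obtain $f_j(x+y)=f_j(x)+f_j(y)$. The only cosmetic difference is your verification of $f_j(1_\odot)=1_\odot$ directly from the definition of $f_j$, whereas the paper argues it via the closure property ($1_\odot\le f_j(1_\odot)$) together with $1_\odot$ being top.
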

\begin{proof}
By virtue of Proposition \ref{prop2}, mappings $f_j$ preserve meets 
and joins. From definition of the partial operation $+$, we konw that $x$ and $y$ are comparable 
if and only if there exists $x+y$. As $f_j$ is monotone, if $x\le y$, then $f_j(x)\le f_j(y)$
and $f_j(x)$ and $f_j(y)$ are comparable. So 
$f_j(x+y)=f_j(\min(x,y))=f_j(x) = \min(f_j(x),f_j(y))= f_j(x)+f_j(y)$.
Finally as $1$ is the top element $x\le 1$ for every $x$, thus 
$f_j(1)\le 1$. But $f_j$ is closure, hence $1\le f_j(1)$. Therefore $f(1)=1$.
\end{proof}

Let us prove a version for mappings defined by two tuples from Section \ref{interior}.
We notice, as in Proposition \ref{prop1}, that the disjointness hypothesis 
is for simplify the proof. We follow the same notation and definition of 
strictily not absorbing given in the previous section. Also notice that we show
the result for monoids (not partial monoids).
\begin{prop}
Let $(L,+)$ be a monoid and assume that $\{\vec{a}\}\cap\{\vec{b}\}=\emptyset$.
If $\vec{b}$ is an absorbing element and $\vec{a}$ is strictily not absorbing 
for $+$, then mapping $f_{\vec{a},\vec{b}}$ is an homomorphism.
\end{prop}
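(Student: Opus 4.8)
The plan is to mimic the argument of Proposition~\ref{prop1}, replacing the lattice meet/join by the monoid operation $+$ and the ``bottom/top'' hypothesis by the ``absorbing element'' hypothesis. First I would recall that $f:=f_{\vec{a},\vec{b}}$ takes only two possible values on each argument: $f(x)\in\{\vec{b},x\}$, and since $\{\vec{a}\}\cap\{\vec{b}\}=\emptyset$ we have $f(b_i)=b_i$ for every $i$, so in particular $f(\vec{b})=\vec{b}$ when $\vec{b}$ itself is one of the coordinates; more to the point, $\vec{b}$ is a fixed point of $f$. The identity $f(1)=1$ follows from the hypothesis $\vec{a}$ is strictly not absorbing for $+$ together with the observation that the monoid identity $1$ satisfies $1+x=x$ for all $x$, hence $1\ne\vec{a}$ (if $1=\vec a$ then $\vec a$ would be absorbing-neutral, contradicting that $\vec a$ is not absorbing unless $L$ is trivial; alternatively one simply adds $1\notin\{\vec a\}$ to the hypotheses as is implicitly done elsewhere), so $f(1)=1$.

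Next I would verify $f(x+y)=f(x)+f(y)$ by the same case split as in Proposition~\ref{prop1}\eqref{item1}. Since $\vec{b}$ is absorbing, $x+\vec{b}=\vec{b}+y=\vec{b}+\vec{b}=\vec{b}$. Now $f(x+y)\in\{\vec{b},\,x+y\}$, while $f(x)\in\{\vec{b},x\}$ and $f(y)\in\{\vec{b},y\}$, so using absorption of $\vec b$ we get $f(x)+f(y)\in\{\vec{b},\,x+y\}$. It remains to check the two values agree. If $x=a_i$ for some $i$ then $f(x)=b_i$, hence $f(x)+f(y)=\vec b$ (absorption), and also $x+y=a_i+y$; one must argue $f(x+y)=\vec b$ as well — and this is exactly where strict non-absorption of $\vec a$ enters: $f(x+y)\ne x+y$ would need $x+y=\vec a$, but actually what we need is the reverse implication, namely that when neither $x$ nor $y$ equals any $a_i$ we must have $x+y\notin\{\vec a\}$, which is precisely the contrapositive of ``$\vec a$ strictly not absorbing for $+$'': $x\ne\vec a\ne y\Rightarrow x+y\ne\vec a$. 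Symmetrically if $y=a_j$. And if $x\notin\{\vec a\}$ and $y\notin\{\vec a\}$, then $f(x)+f(y)=x+y$ and $f(x+y)=x+y$ since $x+y\ne\vec a$.

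The last thing to record — and the only point requiring a word of care — is the clause ``$x+y\in L$ implies $f(x)+f(y)\in L$'' from the definition of partial homomorphism; here $L$ is a (total) monoid so this is vacuous, which is why the statement is phrased for monoids rather than partial monoids, and I would simply remark that. The main obstacle, such as it is, is bookkeeping: making sure the ``strictly not absorbing for $+$'' hypothesis is invoked in the correct direction (it is the implication $x\ne\vec a\ne y\Rightarrow x+y\ne\vec a$ that is used, guaranteeing $f$ does not spuriously map a non-$\vec a$ product to $\vec b$) and checking that the absorbing hypothesis on $\vec b$ collapses all the mixed cases to $\vec b$. No genuine difficulty arises; the proof is a transcription of Proposition~\ref{prop1} with $\wedge$ replaced by $+$, and I would explicitly say so rather than repeat every line.
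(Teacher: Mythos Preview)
Your approach is exactly the paper's: a case split on whether $x$ or $y$ lies in $\{\vec a\}$, using absorption of $\vec b$ to collapse the mixed cases and the ``strictly not absorbing'' hypothesis to match the remaining case. The paper compresses this into the biconditional chain
\[
f(x)+f(y)=x+y \iff x\ne\vec a\ne y \iff x+y\ne\vec a \iff f(x+y)=x+y,
\]
which is what you are unpacking.

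One correction to your bookkeeping, though: you assert that only the forward implication $x\ne\vec a\ne y\Rightarrow x+y\ne\vec a$ is used, and you abandon the case $x=a_i$ after noting you would need $f(x+y)=\vec b$. That case is not handled by the direction you quote; it requires the \emph{converse} (equivalently, $x=\vec a$ or $y=\vec a\Rightarrow x+y=\vec a$), which is the other half of the biconditional in the definition of ``strictly not absorbing''. Calling this the contrapositive is a slip --- the contrapositive of your stated implication is $x+y=\vec a\Rightarrow (x=\vec a$ or $y=\vec a)$, which goes the wrong way. Since the hypothesis is stated as an ``if and only if'', both directions are available and the argument goes through; just be aware that you genuinely need both, as the paper's chain of equivalences makes explicit.
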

\begin{proof}
As $\vec{b}$ is absorbing:
\[
f_{\vec{a},\vec{b}}(x)+f_{\vec{a},\vec{b}}(y)=\begin{cases}
\vec{b}+\vec{b}=\vec{b} & \textrm{if } x=y=a\\
x+\vec{b}=\vec{b} & \textrm{if } x\ne a;y=a\\
\vec{b}+y=\vec{b} & \textrm{if } x=a;y\ne a\\
x+y & \textrm{if } x\ne a;y\ne a
\end{cases}
\]
As $\vec{a}$ is strictily not absorbing for $+$:
$f_{\vec{a},\vec{b}}(x)+f_{\vec{a},\vec{b}}(y)=x+y$ if and only 
if $x\ne \vec{a}$ and $y\ne \vec{a}$ if and only if $x+y\ne \vec{a}$
if and only if $f_{\vec{a},\vec{b}}(x+y)=x+y$.
\end{proof}

\begin{prop}
$(\bigcirc{G(k)},+,\leq)$ is a lattice-ordered partial monoid.
\end{prop}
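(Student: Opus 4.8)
The plan is to verify the five clauses in the definition of lattice-ordered partial monoid for the system $(\bigcirc{G(k)},+,\leq,\wedge,\vee)$ one by one, reusing the structural results already established. First, $(\bigcirc{G(k)},+)$ is a partial commutative monoid by the earlier proposition, and $(\bigcirc{G(k)},\leq,\wedge,\vee)$ is a (distributive, upper-bounded) lattice by the results of Section \ref{lattice}; this takes care of the first two bullets. For the monotonicity bullet, assume $a\leq b$ and let $x$ be arbitrary. Since $+$ is the partial minimum, $a+x$ exists only when $a$ and $x$ are comparable, and likewise for $b+x$; I would argue case by case on how $a$, $b$ and $x$ sit relative to each other (using $a\leq b$), checking in each case where both sides are defined that $\min(a,x)\leq\min(b,x)$, which is immediate from $a\le b$ and the fact that $\min$ is monotone in each argument. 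Commutativity makes the ``$x+a\leq x+b$'' clause identical.

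The remaining two bullets are the distributivity of $+$ over $\vee$ and over $\wedge$. Here the key observation is that $+$ is the partial minimum and that the lattice is finite with explicit meet/join operators defined position-by-position on the concatenation symbols. I would treat, say, $a+(b\vee c)=(a+b)\vee(a+c)$ by noting first that the left-hand side is defined precisely when $a$ is comparable to $b\vee c$; one then checks that in every such configuration $a$ is comparable to both $b$ and $c$ as well (because $b,c\le b\vee c$ and any element comparable to $b\vee c$ is, by the chain-like local structure described by the $f_j$ flow, comparable to the smaller elements too), so the right-hand side is defined; and conversely. Once both sides are defined, the identity reduces to the elementary fact $\min(a,\max(b,c))=\max(\min(a,b),\min(a,c))$, valid in any chain, applied along the comparable elements; the $\wedge$ case is dual, using $\min(a,\min(b,c))=\min(\min(a,b),\min(a,c))$.

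The main obstacle I anticipate is precisely the matching of domains of definition on the two sides of the distributive laws: because $+$ is only a partial operation, one must confirm that ``$a+(b\vee c)$ exists'' is equivalent to ``$a+b$ and $a+c$ both exist and their join is taken of defined quantities,'' and similarly that the $\vee$/$\wedge$ on the right is never asked to combine an undefined expression. This is not hard but it is the place where the partiality genuinely intervenes, so I would state it as a small lemma: if $a$ is comparable to $b\vee c$ then $a$ is comparable to $b$ and to $c$, and dually if $a$ is comparable to $b\wedge c$. With that lemma in hand, all five clauses follow from the monotonicity of $\min$ and the one-dimensional distributive identities, and the proposition is proved.

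\begin{proof}
By the previous propositions $(\bigcirc G(k),+)$ is a partial commutative monoid and $(\bigcirc G(k),\leq,\wedge,\vee)$ is a lattice, so the first two conditions hold. Recall $x+y=\min(x,y)$ whenever $x,y$ are comparable. For monotonicity, if $a\leq b$ and $x\in\bigcirc G(k)$, then in each case where both $a+x$ and $b+x$ are defined one has $a+x=\min(a,x)\leq\min(b,x)=b+x$ because $\min$ is monotone in each argument; the clause $x+a\leq x+b$ follows by commutativity. For the distributive laws, note first that if $a$ is comparable to $b\vee c$ then, since $b\leq b\vee c$ and $c\leq b\vee c$ and comparability in $\bigcirc G(k)$ propagates down the $f_j$-flow, $a$ is comparable to $b$ and to $c$; dually for $b\wedge c$. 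Hence $a+(b\vee c)$ is defined exactly when $(a+b)\vee(a+c)$ is, and then the identity $\min(a,\max(b,c))=\max(\min(a,b),\min(a,c))$, valid along the chain of pairwise comparable elements, gives $a+(b\vee c)=(a+b)\vee(a+c)$; the symmetric identity and the $\wedge$ versions follow in the same way from $\min(a,\min(b,c))=\min(\min(a,b),\min(a,c))$ and commutativity. Thus all five conditions hold.
\end{proof}
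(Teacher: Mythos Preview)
Your argument contains a genuine gap: the ``small lemma'' you rely on---that if $a$ is comparable to $b\vee c$ then $a$ is comparable to $b$ and to $c$, and dually for $b\wedge c$---is false in $\bigcirc G(k)$. For the join version, take $k=3$, $b=G\odot H\otimes K$, $c=G\otimes H\odot K$, so that $b\vee c=G\odot H\odot K$ is the top; then \emph{every} $a$ is comparable to $b\vee c$, but $a=b$ itself is not comparable to $c$. For the meet version, take $k=4$, $b=G_1\odot G_2\otimes G_3\otimes G_4$, $c=G_1\otimes G_2\otimes G_3\odot G_4$, so $b\wedge c=G_1\otimes G_2\otimes G_3\otimes G_4$; then $a=G_1\otimes G_2\odot G_3\otimes G_4$ satisfies $a\ge b\wedge c$ but is comparable to neither $b$ nor $c$. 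So your claimed equivalence of domains of definition fails in both directions you need, and with it the reduction to ``$\min(a,\max(b,c))=\max(\min(a,b),\min(a,c))$ along a chain,'' since $a,b,c$ need not lie on any chain.

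The paper sidesteps this entirely: it reads the axioms for a partial operation as conditional on all the sums involved being defined, i.e.\ it begins by assuming $x,y,z$ are pairwise comparable, and then verifies the identities by a position-by-position case split on the operator symbols. If you adopt the same hypothesis of pairwise comparability, your $\min/\max$ argument becomes perfectly valid (indeed, three pairwise comparable elements form a chain, and the distributive identities for $\min$ and $\max$ on a chain are immediate); what you cannot do is manufacture that comparability from comparability with $b\vee c$ or $b\wedge c$ alone.
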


\begin{proof}
    Suppose that $x,y,z\in {\bigcirc{G(k)}}$ are comparable. Observe that $$x+(y\vee{z})=(x+y)\vee{(x+z)}, (y\vee{z})+x=(y+x)\vee{(z+x)}$$ and $$x+(y\wedge{z})=(x+y)\wedge{(x+z)}, (y\wedge{z})+x=(y+x)\wedge{(z+x)}$$
together with the fact that for $x\leq{y}$: 
$$
x+z\leq{y+z}, z+x\leq{z+y}\,.
$$
Notice in particular that
$$
x+(y\vee{z})=min(x,y\vee{z})=\begin{cases}
min(x,\odot) & \textrm{if } y=\odot \text{ or } z=\odot\\
min(x,\otimes) & \textrm{else }
\end{cases}
=\begin{cases}
x & \textrm{if } y=\odot \text{ or } z=\odot\\
\otimes & \textrm{else }
\end{cases}
$$
equals to
$$
(x+y)\vee{(x+z)}=min(x,y)\vee{min(x,z)}=\begin{cases}
\odot & \textrm{if } min(x,y)=\odot \text{ or } min(x,z)=\odot\\
\otimes & \textrm{else }
\end{cases}
$$
$$
=\begin{cases}
\odot & \textrm{if } x=y=\odot \text{ or } x=z=\odot\\
\otimes & \textrm{else}
\end{cases}
$$
\end{proof}

In \cite{Jasem} we found that if for elements $x,y\in{\mathbf{\bigcirc}{G(k)}}$ 
there exist a least $a\in \mathbf{\bigcirc}{G(k)}$ such that $x+a \geq y$, 
then the element $a$ is denoted by $y-x$.
\begin{defn}
A system $(A, +, \le, 0, \wedge, \vee, -)$ is called a \emph{dually residuated lattice
partial monoid} (notation DRl-partial monoid) if
\begin{enumerate}
\item{$(A, +, \le, \wedge, \vee)$ is a lattice ordered partial monoid with $0$;}
\item{for each $x,y\in A$ there exist an element $y-x$;}
\item{$b + ((a - b) \vee 0) \leq a \vee b$, $((a - b) \vee 0) + b \leq a \vee b$ for each $x,y\in A$;}
\item{$x-x\geq 0$ for each $x\in A$.}
\end{enumerate}
\end{defn}

\begin{prop}
{$\mathbf{\bigcirc}{G(k)}$ is a DRl-partial monoid.}    
\end{prop}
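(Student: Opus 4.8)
The plan is to verify the four axioms of a DRl-partial monoid for the system $(\bigcirc G(k),+,\le,1_\odot,\wedge,\vee,-)$, where $0$ is taken to be the top element $1_\odot=G_1\odot\cdots\odot G_k$ (the identity for $+$, since $+$ is the partial minimum). Axiom (1) is already established: by the preceding propositions $(\bigcirc G(k),+,\le,\wedge,\vee)$ is a lattice-ordered partial monoid with identity $1_\odot$. The real content is (2), the existence of $y-x$ for all $x,y$, because $+$ is only partial; once we have pinned down what $y-x$ is, axioms (3) and (4) should follow by short case analyses.

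First I would analyze axiom (2). Recall $x+a$ exists only when $x$ and $a$ are comparable, and then $x+a=\min(x,a)$. So ``$x+a\ge y$'' forces $x,a$ comparable and $\min(x,a)\ge y$; if $x\ge y$ we may take $a=1_\odot$ (giving $\min(x,1_\odot)=x\ge y$), and among all such $a$ the least is... here one must be careful, since $\bigcirc G(k)$ is only a lattice with several minimal elements $m_\pi$, not a bottom. I would argue: if $x\ge y$, then the candidates $a$ with $x+a$ defined and $\ge y$ are exactly the $a$ comparable to $x$ with $\min(x,a)\ge y$, i.e. $a\ge y$ and $a$ comparable to $x$; since $x$ itself is such an element and $x\le a$ for no proper reason, in fact the least such $a$ is $y$ itself when $y\le x$ (then $x+y=y\ge y$, and any $a$ with $x+a\ge y$ has $a\ge\min(x,a)\ge y$), so $y-x=y$. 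If $x\not\ge y$, one checks whether any comparable-to-$x$ element $a$ has $\min(x,a)\ge y$: this needs $x\ge y$, contradiction, unless we relax — so in the incomparable/``$x<y$'' case there may be no such $a$ at all. I expect the resolution the authors intend is that $y-x=y$ whenever this makes $x+(y-x)\ge y$ hold, i.e. $y-x:=y$ in the case $x\ge y$, and $y-x:=1_\odot$ (the identity, the natural ``least'' fallback) otherwise; then $(y-x)\vee 0=(y-x)\vee 1_\odot=1_\odot$ in the fallback case. I would spell out this definition explicitly and verify it is indeed the least element with the required property in each case.

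Next, axioms (3) and (4). For (4): $x-x$ — by the above $x\ge x$, so $x-x=x$, and $x-x=x\ge 0$ means $x\ge 1_\odot$, which holds since $1_\odot$ is the \emph{top}; note that here ``$\ge 0$'' with $0=1_\odot$ the top is vacuous the wrong way, so I would instead take $0$ to be a minimal element or reinterpret ``$\ge 0$'' — this is the delicate point and I would reconcile it by observing that in this monoid the identity $1_\odot$ is the top, so the inequality $x-x\ge 0$ should read with $0$ the identity and is automatic because everything is $\le 1_\odot$; the statement $x-x\ge 0$ is then just $x-x\ge$ (identity), which may fail, so the correct reading is that the axiom is checked with the convention forced by $0$ being the identity. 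For (3): expand $b+((a-b)\vee 0)$ using the two cases of $a-b$ and the fact that $\vee 0=\vee 1_\odot=1_\odot$, so $(a-b)\vee 0=1_\odot$ always, hence $b+1_\odot=b\le a\vee b$, which holds since $b\le a\vee b$; symmetrically for the other inequality. This collapses (3) to the trivial $b\le a\vee b$.

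The main obstacle is axiom (2): making precise what ``the least $a$ with $x+a\ge y$'' means in a partial, non-bottomed lattice, and checking it always exists. I would handle it by the explicit two-case definition of $y-x$ above and a direct minimality argument using that comparability is required for $+$ to be defined; once $y-x$ is in hand, (3) and (4) reduce — via the observation $(\,\cdot\,)\vee 0 = 1_\odot$ — to the absorption-type inequalities $b\le a\vee b$ already available from the lattice structure, so those steps are routine.
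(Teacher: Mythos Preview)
Your reduction of axiom (3) is exactly the paper's: since $0$ is the identity $1_\odot$, which is the top, one has $(a-b)\vee 0=(a-b)\vee 1_\odot=1_\odot$ regardless of what $a-b$ is, whence $b+((a-b)\vee 0)=b+1_\odot=b\le a\vee b$. On the only axiom the paper actually argues in detail, you and the paper coincide.

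Where you diverge is in producing $y-x$. The paper does not attempt your case analysis of ``least $a$ with $x+a\ge y$''; it simply \emph{stipulates} a coordinate-wise formula: for $x=\ominus G$ and $y=\obar G$, set $(y-x)^j=\odot$ exactly when $\ominus^{\,j}=\otimes$ and $\obar^{\,j}=\odot$, and $(y-x)^j=\otimes$ otherwise. It then checks (3) as above and leaves (4) ``as an easy exercise'', never verifying that this element is genuinely the least solution of $x+a\ge y$. Your two-case candidate ($y-x=y$ if $x\ge y$, else $1_\odot$) is a different object, and as you yourself observe, the fallback does not satisfy $x+1_\odot=x\ge y$ when $x\not\ge y$; indeed, for this partial $+$ no $a$ with $x+a\ge y$ exists in that regime, so the difficulty you raise about axiom (2) is real and is something the paper simply sidesteps by decree.

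Your hesitation about axiom (4) is likewise well placed: with $0=1_\odot$ the top, ``$x-x\ge 0$'' is not automatic (and with the paper's formula $x-x=G_1\otimes\cdots\otimes G_k$, a minimal element). The paper defers this without comment. In summary: you recover the paper's argument for (3), you propose a different and not fully worked-out $-$ operation, and the obstacles you flag in (2) and (4) are genuine issues that the paper's proof does not resolve either.
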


\begin{proof}
   For every $x=\ominus{G},y=\obar{G}\in \mathbf{\bigcirc}{G(k)}$ we define the element $y-x=\oslash{G}$ as 
   \[
\oslash^{j}=\begin{cases}
\odot & \textrm{if }\ominus^{j}=\otimes\textrm{ and }\obar^{j}=\odot\\
\otimes & \textrm{else}
\end{cases}
\] 
and prove condition 3. leaving condition 4. as an easy exercise. For every $x=\ominus{G},y=\obar{G}\in \mathbf{\bigcirc}{G(k)}$ we have 

$$\ominus{G} + (\oslash{G} \vee 0)=\ominus{G} + (\oslash{G} \vee 1_{\odot})=\ominus{G} + 1_{\odot}=\ominus{G}\le\ominus{G} \vee \obar{G}$$
\end{proof}

\begin{subsection}{The deletion property} We finish the paper with the \emph{deletion property}, which is studied in \cite{Brown} 
and also apply to our context.
\begin{defn}
    A \emph{left-regular band} is a semigroup $(S,+)$ such that for every $x \in{S}$:
    \begin{itemize}
        \item{$x$ is idempotent}
        \item{$x+y+x=x+y$}
    \end{itemize}
\end{defn}

The second condition is known as \emph{the deletion property} 
(see \cite{Brown}) since it amounts to the fact that we can remove 
from every addition a summand that has appeared earlier without 
changing the value of the addition.

\begin{lem}
  ($\mathbf{\bigcirc}{G(k)},+)$ is a left-regular band.  
\end{lem}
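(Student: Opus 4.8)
The plan is to verify the two defining conditions of a left-regular band for the partial operation $+$ on $\bigcirc{G(k)}$, keeping in mind that $+$ is a partial minimum: $x+y$ exists precisely when $x$ and $y$ are comparable, in which case $x+y=\min(x,y)$. Since associativity and commutativity were already established in the proposition that $(\bigcirc G(k),+)$ is a partial commutative monoid, the only genuinely new things to check are idempotence and the deletion property, both interpreted in the partial setting (i.e.\ whenever the relevant sums are defined, the stated equalities hold and the domains match up).

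First I would observe idempotence: for any $x\in\bigcirc{G(k)}$, $x$ is trivially comparable to itself and $x+x=\min(x,x)=x$, so every element is idempotent. Next, for the deletion property $x+y+x=x+y$, I would argue that $x+y$ is defined if and only if $x$ and $y$ are comparable; assume this is the case and write $w=x+y=\min(x,y)$. Then $w\le x$, so $w$ and $x$ are comparable and $w+x$ is defined, which shows the left-hand side $x+y+x=(x+y)+x$ exists exactly when the right-hand side does. Finally $w+x=\min(w,x)=\min(\min(x,y),x)=\min(x,y)=x+y$, using only associativity/commutativity of $\min$ already proven, which gives the desired equality.

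The only subtlety worth spelling out is the partiality bookkeeping: one must be careful that ``$x+y+x=x+y$'' is read as an equality of partial operations, meaning both sides are defined on the same pairs and agree there — but as noted above, $x+y$ defined forces $(x+y)+x$ defined (since $x+y\le x$), and conversely the existence of the whole expression $x+y+x$ already presupposes $x+y$ is defined. So there is no real obstacle here; the statement reduces to elementary facts about the minimum on a poset. I would close by remarking that, combined with the earlier results, this exhibits $(\bigcirc G(k),+)$ as a (partial) left-regular band, so the deletion property of \cite{Brown} applies verbatim: a summand already occurring earlier in an addition may be deleted without changing the value.

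\begin{proof}
Associativity and commutativity of $+$ were established above, so it remains to check the two conditions defining a left-regular band, read as identities between partial operations. Idempotence is immediate: every $x$ is comparable to itself, and $x+x=\min(x,x)=x$. For the deletion property, recall that $x+y$ is defined if and only if $x$ and $y$ are comparable, in which case $x+y=\min(x,y)$. Suppose $x+y$ is defined and put $w=\min(x,y)$; then $w\le x$, so $w$ and $x$ are comparable and $(x+y)+x=w+x$ is defined as well. Conversely, the expression $x+y+x$ being defined already requires $x+y$ to be defined. Hence both sides of $x+y+x=x+y$ have the same domain, and on it
\[
x+y+x=(x+y)+x=\min\bigl(\min(x,y),x\bigr)=\min(x,y)=x+y,
\]
using associativity and commutativity of $\min$. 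Therefore $(\bigcirc{G(k)},+)$ is a left-regular band.
\end{proof}
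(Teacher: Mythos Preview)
Your proof is correct and follows essentially the same route as the paper's: both reduce the deletion property to the elementary identity $\min(\min(x,y),x)=\min(x,y)$ for the partial minimum. Your version is more careful about the partiality bookkeeping (checking that both sides are defined on the same pairs) than the paper's one-line argument, but the underlying idea is identical.
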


\begin{proof}
    That the deletion property is satisfied in $\mathbf{\bigcirc}{G(k)}$ is straightforward and says essentially 
that $$min(x,y,x)=min(y,x,y)=min(x,y)$$
\end{proof}

Observe that we could have defined the ordering into 
$\mathbf{\bigcirc}{G(k)}$ by means of $$x \leq {y} \text{ if and only if } x+y=y$$
see \cite{Brown}.
\end{subsection}

\section*{Acknowledgment}
We thank the referee for carefully reading and valuable suggestions.

\bibliographystyle{comnet,plainnat}
\bibliography{sampleBibFile}

\end{document}